\documentclass[]{amsart}
\usepackage{amsmath, amssymb}
\newfont {\cyr} {wncyr10}
\pagestyle{plain} \frenchspacing
\renewcommand{\labelenumi}{{(\roman{enumi})}}

\usepackage{amsfonts}
\usepackage{amsmath}
\usepackage{amssymb}
\usepackage{setspace}
\usepackage{a4}

\usepackage{color}

\setlength{\parindent}{0mm} \textwidth154mm \evensidemargin0mm
\oddsidemargin0mm

\newtheorem{lemma}{Lemma}[section]
\newtheorem{theorem}[lemma]{Theorem}

\newcounter{claim}[lemma]

\begin{document}

\def \b {\beta}
\font\cyrl=wncyss10
\newcommand{\m}{$\,\textrm{max}\,$}
\newcommand{\w}{\widehat}
\newcommand{\wi}{\widehat}
\newcommand{\ov}{\overline}
\newcommand{\N}{\mathbb{N}}
\def \P{\mathbb{P}}
\def \Hom{\mathrm {Hom}}
\newcommand{\E}{\mathcal{E}}
\newcommand{\MM}{\mathcal{M}}
\newcommand{\K}{\mathcal{K}}
\newcommand{\wt}{\widetilde}
\newcommand{\wh}{\widehat}
\newcommand{\ti}{\tilde}

\def \Z  {\mathbb Z}
\newcommand{\ch}{$\,\textrm{char}$}
\newcommand{\sy}{$\,\textrm{Syl}$}
\newcommand{\sym}{\mathcal{S}}
\newcommand{\au}{$\,\textrm{Aut}$}
\newcommand{\out}{$\,\textrm{Out}$}
\newcommand{\rank}{$\,\textrm{r}$}

\def \<{\langle }
\def \>{\rangle }
\def \L{\mathcal{L}}

\def \Sym {\mathrm {Sym}}
\def \Alt {\mathrm {Alt}}
\def \Mat {\mathrm {Mat}}
\def \PGL {\mathrm {PGL}}
\def \PSL {\mathrm {PSL}}
\def \PSU {\mathrm {PSU}}
\def \PSp {\mathrm {PSp}}
\def \Sp {\mathrm {Sp}}
\def \SL {\mathrm {SL}}
\def \GL {\mathrm {GL}}
\def \GU {\mathrm {GU}}
\def \B {\mathrm {B}}
\def \PSO {\mathrm {PSO}}
\def \GF {\mathrm {GF}}
\def \M {\mathrm {M}}
\Large{\textbf{}}
\def \l {\lambda}
\def \diag {\mathrm {diag}}
\def \Aut {\mathrm {Aut}}

\renewcommand{\labelenumi}{(\roman{enumi})}

\title  {A note on groups in which the centraliser of every element of order $5$ is a $5$-group}
 \author{Sarah Astill}
 \author{Chris Parker}
  \author{Rebecca Waldecker}

\address{Sarah Astill\\
Department of Mathematics\\ The University of Bristol\\\ University Walk\\ Bristol BS8 1TW \\United
Kingdom}\email{sarah.astill@bristol.ac.uk}
\address{Chris Parker\\
School of Mathematics\\
University of Birmingham\\
Edgbaston\\
Birmingham B15 2TT\\
United Kingdom} \email{c.w.parker@bham.ac.uk}

\address{Rebecca Waldecker\\
Institut f\"ur Mathematik\\ Universit\"at Halle--Wittenberg\\
Theodor--Lieser--Str. 5\\ 06120 Halle\\ Germany}
\email{rebecca.waldecker@mathematik.uni-halle.de}

\maketitle

\section{introduction}

\medskip

In \cite{c55},  Dolfi, Jabara  and Lucido determine the finite groups in which  the centraliser of every element
of order $5$ is a $5$-group. However, the main result in \cite {c55} is flawed: when looking at
groups $G $ with $F^*(G/F(G)) \cong \Alt(5)$ and $F(G)$ of odd order the authors of \cite{c55} erroneously prove that $F(G)$ must be abelian. We
believe that the error occurs on page 1060 of their paper. Our main result is a corrected version of their theorem:

\begin{theorem}[Dolfi, Jabara  and Lucido]\label{MainThm}  Suppose that $G$ is a finite group in which the centraliser of every element of order $5$ is a $5$-group. Then one of the following holds:
\begin{enumerate}
\item $G$ is a $5$-group.
\item $G$ is a soluble group and one of the following hold:
\begin{enumerate}
\item $ G$ is a soluble Frobenius group such that either the Frobenius kernel or a Frobenius complement
is a 5-group;
\item   $G$ is a 2-Frobenius group such that $F(G)$ is a $5'$-group and $G/ F(G)$ is a Frobenius group,
whose kernel is a cyclic $5$-group and whose complement is cyclic of  order 2 or 4; or
\item  $G$ is a 2-Frobenius group such that $F(G)$ is a 5-group and $G/ F(G)$ is a Frobenius group,
whose kernel is a cyclic $5'$-group and whose complement is a cyclic 5-group.
\end{enumerate}
\item $G/F(G) \cong \Alt(5)$ or $\Sym(5)$ and $F(G)= O_2(G)O_{2'}(G)$  where $O_2(G)$ is nilpotent of
class at most three and $O_{2'}(G)=O_{\{2,5\}'}(G)$ is  nilpotent of class at most two.
\item $G/F(G) \cong \Alt(6)$,  $\Sym(6)$  or $\M(9)$ and $F(G)= O_2(G)O_3(G)$ where  $O_2(G)$ and $O_3(G)$
are elementary abelian.
\item $G/F(G) \cong \Alt(7)$ and $F(G)=O_2(G)$ is elementary abelian.
\item $G/F(G) \cong {}^2\mathrm B_2(8)$  or ${}^2\B_2(32)$ and $F(G)=O_2(G)$ is elementary abelian.
\item $G/F(G) \cong \PSU_4(2)$ or $\Aut(\PSU_4(2))$ and $F(G)=O_2(G)$ is elementary abelian.
\item $G/F(G) \cong \PSL_2(49)$,  $\PGL_2(49) $ or $\M(49)$ and $F(G)=O_7(G)$ is elementary abelian.
\item $E(G) \cong \PSL_2(5^e)$ with $e \ge 2$ and either $G \cong \PSL_2(5^e)$ or $G \cong \PGL_2(5^e)$ or
$e$ is even and $G\cong \M(5^e)$.
    \item $E(G) \cong \PSL_2(p)$ where $p$ is a prime which can be written as $p = 2\cdot5^e \pm 1$  for some non-negative integer $e$.
\item  $G \cong \PSL_2(11)$,  $\PSL_3(4)$, $\PSL_3(4){:}2_f$, $\PSL_3(4){:}2_i$, $\PSp_4(7)$, $\PSU_4(3)$,  $\Mat(11)$ or $\Mat(22)$.
\end{enumerate}
\end{theorem}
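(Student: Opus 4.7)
The plan is to follow the broad strategy of Dolfi, Jabara and Lucido in \cite{c55}, repairing the step that fails for $G/F(G)\cong\Alt(5)$. Write $\mathcal H$ for the property that centralisers of elements of order $5$ are $5$-groups. Two elementary observations drive everything. First, $\mathcal H$ is inherited by subgroups and by quotients by normal $5'$-subgroups. Second, if some $x\in F(G)$ has order $5$ then $F(G)\leq C_G(x)$ is a $5$-group, so either $F(G)$ is a $5$-group or $F(G)$ is a $5'$-group; in the latter case every order-$5$ element of $G$ acts fixed-point-freely on $F(G)$.

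The soluble case, yielding (i) and (ii), I would handle by the standard Frobenius/$2$-Frobenius analysis: fixed-point-free action of a Sylow $5$-subgroup on the nilpotent coprime part of $F(G)$, together with inspection of $N_G(P)/C_G(P)$ for $P\in\mathrm{Syl}_5(G)$, forces the Frobenius or $2$-Frobenius structure described.

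For $G$ non-soluble I would pass to $\overline G=G/F(G)$. Each component of $F^*(\overline G)$ is quasisimple and inherits $\mathcal H$. Consulting the Classification of Finite Simple Groups and inspecting centralisers of $5$-elements in each simple group isolates the possible simple sections and so gives the almost-simple shapes listed in (iii)--(xii); the admissible outer-automorphism extensions are pinned down by requiring $\mathcal H$ to persist. In cases (iv)--(xii) coprime action, combined with the fact that order-$5$ elements of $G$ act fixed-point-freely on the $5'$-part of $F(G)$, quickly cuts $F(G)$ down to the elementary abelian description recorded there.

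The main obstacle, and the actual point of this note, is case (iii), where $\overline G\cong\Alt(5)$ or $\Sym(5)$. Since $5\nmid|F(G)|$ and $F(G)$ is nilpotent, one immediately has $F(G)=O_2(G)\times O_{\{2,5\}'}(G)$; the authors of \cite{c55} incorrectly concluded that both factors are abelian. The correct bounds come from viewing the successive quotients of the lower central series of each factor as $\mathbb F_q[\Alt(5)]$-modules on which elements of order $5$ act without fixed points, and combining Higman-type results on fixed-point-free automorphisms of order $5$ on nilpotent groups with the modular representation theory of $\Alt(5)$ in characteristic $2$ and in characteristic coprime to $10$. The hard part is to push the class bounds down to exactly $3$ for $O_2(G)$ and $2$ for $O_{\{2,5\}'}(G)$, and to exhibit examples showing that these bounds are attained.
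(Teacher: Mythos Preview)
Your high-level strategy matches the paper's: rely on \cite{c55} for the soluble analysis and for the CFSG-based identification of the almost simple quotient, and repair case~(iii). The gap is in the repair itself. Invoking ``Higman-type results on fixed-point-free automorphisms of order~$5$'' cannot yield class bounds of~$2$ or~$3$: the Higman--Kreknin--Kostrikin bounds are enormously larger, so this gives no leverage. For $O_2(G)$ the paper instead quotes the Holt--Plesken theorem \cite[Theorem~2]{Holt}, a substantial result specific to $\Alt(5)$ acting on $2$-groups. For the odd part the paper proves Theorem~\ref{cl3} by an argument you do not anticipate: in a minimal class-three counterexample $Q$, every $G$-chief factor is the unique $4$-dimensional $\GF(r)\Alt(5)$-module $V$ admitting a fixed-point-free $5$-element; one then establishes that $V\wedge V$ has no composition factor isomorphic to $V$ (Lemma~\ref{modfacts}) and that $\Hom_{\Alt(5)}(V\otimes V,V)$ is one-dimensional, spanned by an explicit map $\gamma$ (Lemma~\ref{unique}). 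These facts force the commutator maps between successive chief factors of $Q$ to be scalar multiples of~$\gamma$, and the contradiction is obtained by evaluating the Hall--Witt identity $[x,y,z][y,z,x][z,x,y]=1$ on specific basis elements and checking that the products dictated by~$\gamma$ do not cancel. None of the Hall--Witt mechanism, the uniqueness of~$\gamma$, or the $V\wedge V$ calculation appears in your outline, and these constitute the entire technical content of the correction.

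A secondary point: cases (iv) and (viii) are not dispatched ``quickly'' by coprime action alone. For (viii) the paper needs a {\sc Magma} computation (Lemma~\ref{notin}) showing that $\PSL_2(49)$ does not embed in $\GL_4(\mathbb Z/49\mathbb Z)$, together with an explicit analysis of $V\otimes V$ for the $4$-dimensional Steinberg-twisted module (Lemma~\ref{74 unique}) and an $\mathrm{Ext}^1$ vanishing (Lemma~\ref{cr49}), to force $O_7(G)$ to be elementary abelian; and a parallel {\sc Magma} check (Lemma~\ref{notin2}) that $\Alt(6)$ does not embed in $\GL_4(\mathbb Z/9\mathbb Z)$ underlies the elementary abelian conclusion for $O_3(G)$ in~(iv).
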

We    adopt
notation from \cite{c55}, so, for all odd primes $p$ and all $m \in \N$, we write $\M(p^{2m})$ to denote
the non-split extension of $\PSL_2(p^{2m})$ such that $|\M(p^{2m}):\PSL_2(p^{2m})|=2$.
In part (xi) of Theorem~\ref{MainThm}, we denote by $2_f$ a field automorphism of order $2$ and by $2_i$ the inverse transpose automorphism.  Throughout the statement, elementary abelian groups may be trivial. We finally point out  that the $G/F(G)$-modules involved in $F(G)$ are explicitly known in all cases.

The paper is organised as follows. In Section 2,  we construct examples of $r$-groups of class two for all
odd primes $r$, $r \neq 5$, which admit an action of $\Alt(5)$ with an element of order $5$ acting without non-trivial fixed
points. This demonstrates that the statement in \cite{c55} is false.  However,  in Section 3, we show
that there are no groups $G$ with $O_{\{2,5\}'}(G)$ nilpotent of class three with
$G/O_{\{2,5\}'}(G) \cong \Alt(5)$ and with
an element of order $5$ acting fixed point
freely. This is the main contribution of this paper and our proof of this fact is guided by some of the arguments in \cite{Holt}.

We further note that the authors of \cite{c55} use the fact that if $\Alt(5)$ acts on a $7$-group,  then the
$7$-group is abelian to show that $\PSL_2(49)$ cannot act on a non-abelian $7$-group.    In Section 4, we provide an alternative
proof for this fact and prove, in addition, that such an abelian group must  be elementary abelian. We also show that, in part (iv) of Theorem~\ref{MainThm}, the subgroup $O_3(G)$ is elementary abelian. This sharpens the result
stated in \cite{c55}.
\bigskip

\noindent {\bf Acknowledgement.}  The second author is  grateful to the DFG for their support and both the first and second author thank the mathematics department in Halle for their hospitality.

\bigskip

\section{Construction of a class two group admitting $\Alt(5)$ with an element of order 5 acting fixed point freely}

\medskip
In this section we demonstrate that Theorem \ref{MainThm}~(iii) cannot be strengthened to say that $O_{2'}(G)$ is abelian by constructing examples with $O_{2'}(G)$ of class two.

Suppose that $r$ is an odd prime with $r \neq 5$ and that $W$ is a
$5$-dimensional vector space over $\GF(r)$ with basis $\{a_1,\dots, a_5\}$. Let $X:=\Sym(5)$ naturally  permute this basis. This permutation action gives rise to a faithful linear action of $X$ on $W$ which is in fact a reduction modulo $r$ of the corresponding integral representation of $X$. We
let $V$ be the $\GF(r) X$-submodule of $W$ with basis $\mathfrak V=
\{v_1, \dots, v_4\}$ where, for all $1\le i \le 4$, we set $v_i:= a_1-a_{i+1}$. It is easy to check that an element of order $5$ in $X$ acts fixed point freely on $V$ and that $W=V \oplus \langle a_1+a_2+a_3+a_4+a_5\rangle$.
Finally we let $Y:= X' \cong \Alt(5)$ and note that the restriction of $V$ to $Y$  is also an irreducible $\GF(r)Y$-module. We also denote this module by $V$.
\medskip

Recall that $V \wedge V$ is isomorphic to the submodule of $V \otimes V$ generated by the vectors  $ v_i \otimes
v_j - v_j \otimes v_i$ for $1\le i < j \le 4$.

\begin{lemma} \label{modfacts} The $\GF(r)Y$-module $V \wedge V$ has no composition factor isomorphic to $V$.
\end{lemma}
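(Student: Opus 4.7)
The plan is a short Brauer-character / trace argument in two steps: I would first show that no simple $\GF(r)Y$-module has dimension $2$, and then derive a contradiction by computing the trace of an involution on $V\wedge V$ in two different ways.

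For the classification step, I would use that $Y\cong\Alt(5)$ is non-abelian simple, so its only one-dimensional representation is trivial, and invoke the Brauer character table of $\Alt(5)$ in odd characteristic $r\ne 5$. The absolutely irreducible modules have dimensions $1,3,3,4$ together with an additional $5$-dimensional module when $r\ne 3$. Allowing for the possibility that the two $3$-dimensional absolutely irreducible modules are Galois-conjugate over $\GF(r)$, the dimensions of the simple $\GF(r)Y$-modules lie in the set $\{1,3,4,5,6\}$; in particular none has dimension~$2$.

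Now suppose for contradiction that $V$ is a composition factor of $V\wedge V$. Since $\dim(V\wedge V)=6$ and $\dim V=4$, the remaining composition factors have total dimension $2$, and so by the previous paragraph they must consist of two copies of the trivial module. I would then compute the trace of the involution $g=(1,2)(3,4)\in Y$ on $V\wedge V$ in two ways. From the matrix of $g$ on $V$ in the basis $v_1,\dots ,v_4$, obtained directly from the permutation action on the $a_i$, one reads off $\mathrm{tr}(g\mid V)=0$ and $\mathrm{tr}(g^2\mid V)=4$. Since $r$ is odd, the identity
\[
\mathrm{tr}(g\mid V\wedge V)=\tfrac12\bigl(\mathrm{tr}(g\mid V)^2-\mathrm{tr}(g^2\mid V)\bigr)
\]
makes sense in $\GF(r)$ and yields $\mathrm{tr}(g\mid V\wedge V)=-2$. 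On the other hand, additivity of trace along any composition series forces this value to equal $\mathrm{tr}(g\mid V)+2\cdot 1=2$ in $\GF(r)$. Hence $4\equiv 0\pmod r$, which contradicts $r$ being odd.

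I expect no substantive obstacle to carrying out this plan. The one point requiring some attention is the classification step in the case $r=3$, where the ordinary $5$-dimensional irreducible of $\Alt(5)$ becomes reducible modulo $3$ (it has $V$ as a quotient with the trivial module as its socle); however, the only property needed from the classification, namely the non-existence of a $2$-dimensional simple $\GF(r)Y$-module, is readily seen from the Brauer character table in every relevant characteristic, so the argument goes through uniformly.
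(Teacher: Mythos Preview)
Your argument is correct. The trace computation for $g=(1,2)(3,4)$ goes through exactly as you say (one checks directly that $g$ has eigenvalues $-1,-1,1,1$ on $V$, so $\mathrm{tr}(g\mid V)=0$), and the classification step is unproblematic in every odd characteristic $r\ne 5$.

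The paper, however, takes a different and somewhat more self-contained route. Rather than invoking the list of dimensions of simple $\GF(r)Y$-modules, it fixes the Klein four subgroup $S=\langle(2,3)(4,5),(2,4)(3,5)\rangle$ and computes fixed points directly: $C_{V\wedge V}(S)=0$ while $C_V(S)=\langle v_1+v_2+v_3+v_4\rangle\ne 0$. Since $|S|=4$ is coprime to $r$, taking $S$-fixed points is exact on $\GF(r)Y$-modules, so a composition factor isomorphic to $V$ would force $C_{V\wedge V}(S)\ne 0$. This avoids any appeal to the Brauer character table and needs no case distinction at $r=3$. Your approach, on the other hand, packages the information as a single numerical identity ($-2\ne 2$ in $\GF(r)$) and in principle generalises more readily to other modules once the relevant character values are known. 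Both proofs are short; the paper's is marginally more elementary in its prerequisites, while yours makes the obstruction visible as a character-theoretic inequality.
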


\begin{proof}
Let $U:=V \wedge V$. Calculating most easily with  $S = \langle (2,3)(4,5),(2,4)(3,5)\rangle$, we obtain that $C_U(S)=0$. Since $C_V(S) = \langle v_1+v_2+v_3+v_4\rangle$, the result follows.
\end{proof}

We need the following statement about self-extensions of $V$ by $V$.

\begin{lemma}\label{coho} Any $\GF(r)Y$-module with all composition
factors isomorphic to $V$ is completely reducible.
\end{lemma}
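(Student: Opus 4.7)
The plan is to reduce the statement to the vanishing of $\mathrm{Ext}^1_{\GF(r)Y}(V,V)$: granting this, an easy induction on the composition length of $M$, using $\mathrm{Ext}^1(V, V^n) \cong \mathrm{Ext}^1(V,V)^n$, shows any module whose composition factors are all $\cong V$ splits as a direct sum of copies of $V$. Since $|Y| = 60$ and $r$ is odd with $r \neq 5$, only two cases arise: if $r \nmid 60$ then Maschke's theorem applies, and the only remaining case is $r = 3$, which requires a genuine argument.

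For $r = 3$, I would use the self-duality $V \cong V^*$ to rewrite $\mathrm{Ext}^1_{\GF(3)Y}(V, V) \cong H^1(Y, V \otimes V)$ and compute the latter by restriction to a Sylow $3$-subgroup $P = \langle t\rangle$ with $t = (1,2,3)$. Since $t$ rotates $\{1, 2, 3\}$ cyclically and fixes $\{4, 5\}$, the restriction $V|_P$ decomposes as a $\GF(3)P$-module as the regular module plus one trivial summand, namely $V|_P \cong \GF(3)P \oplus T$ with $T = \langle a_4 - a_5\rangle$; hence $V \otimes V|_P$ is a direct sum of projective $\GF(3)P$-modules and one copy of $T \otimes T \cong \mathbf{1}$, so $H^1(P, V \otimes V) \cong H^1(P, \mathbf{1}) \cong \GF(3)$.

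Next I would exploit that the Sylow $3$-subgroups of $Y \cong \Alt(5)$, having prime order $3$, form a trivially-intersecting family, so Cartan--Eilenberg's stable-elements theorem yields $H^1(Y, V \otimes V) \cong H^1(P, V \otimes V)^{N_Y(P)/P}$. With $N_Y(P) = \langle P, \sigma\rangle$ and $\sigma = (1,2)(4,5)$ inverting $t$, $N_Y(P)/P$ is cyclic of order $2$. The crucial subtlety is the twisted action of this $C_2$ on $H^1(P, V \otimes V) \cong H^1(P, T \otimes T)$: the element $\sigma$ acts on $T$ as $-1$ and hence trivially on $T \otimes T$, so under the identification with $\Hom(P, \GF(3))$ the action of the generator of $N_Y(P)/P$ comes purely from conjugation on $P$, sending $\phi$ to $-\phi$ because $\sigma$ inverts $t$; as $2 \neq 0$ in $\GF(3)$, the fixed subspace vanishes, giving $H^1(Y, V \otimes V) = 0$ and hence $\mathrm{Ext}^1_{\GF(3)Y}(V, V) = 0$, as required. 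The main obstacle is this case $r = 3$, and within it the delicate point is the correct identification of the twisted $N_Y(P)/P$-action on $H^1(P, T \otimes T)$: the sign from $\sigma$ acting on $T$ is squared away in the tensor square, but reappears via conjugation on $P$ to produce the net $-1$ on $H^1$ that annihilates the fixed subspace.
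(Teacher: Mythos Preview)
Your proof is correct and takes a genuinely different route from the paper's. Both arguments reduce at once to $\mathrm{Ext}^1_{\GF(3)Y}(V,V)=0$, but the paper proceeds via Shapiro's Lemma with the point stabiliser $H\cong\Alt(4)$: since the permutation module $W=1_H{\uparrow}^Y$ splits as $\mathbf 1_Y\oplus V$, one gets
\[
\mathrm{Ext}^1_Y(\mathbf 1_Y,V)\oplus\mathrm{Ext}^1_Y(V,V)\;\cong\;\mathrm{Ext}^1_Y(W,V)\;\cong\;\mathrm{Ext}^1_H(\mathbf 1_H,V{\downarrow}_H),
\]
and the right-hand side is computed to be one-dimensional; an independent construction (inducing the trivial module from a $\mathrm{Dih}(10)$) shows $\dim\mathrm{Ext}^1_Y(\mathbf 1_Y,V)\ge 1$, forcing $\mathrm{Ext}^1_Y(V,V)=0$. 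Your Cartan--Eilenberg stable-element computation is more direct and self-contained, and would adapt to any group whose Sylow $p$-subgroups are TI with a known normaliser action; the paper's approach, by contrast, yields the side information $\dim\mathrm{Ext}^1_Y(\mathbf 1_Y,V)=1$ along the way. One point in your argument deserves an extra word: to read the $\sigma$-action on $H^1(P,V\otimes V)$ off from its action on $T\otimes T$, you need the splitting $V|_P=\GF(3)P\oplus T$ to be $N_Y(P)$-invariant, not merely $P$-invariant. This does hold, since $T=\langle a_4-a_5\rangle$ is visibly $N_Y(P)$-stable and one checks that it has an $N_Y(P)$-stable complement in $V$ (isomorphic, as an $S_3\cong N_Y(P)$-module, to the projective cover of the trivial module), so the identification $H^1(P,V\otimes V)\cong H^1(P,T\otimes T)$ is $N_Y(P)$-equivariant and your sign calculation goes through.
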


\begin{proof} We need to show that $\mathrm {Ext}^1_Y(V,V)= 0$.
(For notation see for example \cite{benson}.)
If $r > 3$, then, as $r \neq 5$, the result follows
from Maschke's Theorem. So suppose that $r=3$. Let $H$ denote a subgroup of $Y$ which is isomorphic to $\Alt(4)$. We recall that $V$ is a direct summand of the natural
permutation module $W$ for $Y$. Hence if we let $1_H$ denote the trivial $\GF(3)H$-module, then  $W$ is the induced module $1_H\uparrow ^Y$.
It follows that
$$\mathrm {Ext}_Y^1(W,V)= \mathrm {Ext}_Y^1(1_H\uparrow ^Y,V)= \mathrm
{Ext}_H^1(1_H,V\downarrow_ H),$$
where the second equality comes from Shapiro's Lemma \cite[Corollary 3.3.2]{benson}.
Now $V\downarrow_ H$ is a direct sum of a faithful $3$-dimensional module and a trivial module. As $H$ contains a Klein fours subgroup and as this subgroup acts coprimely on the
$3$-dimensional module, this module only has trivial extensions with the trivial $H$-module. 

Since $\dim \mathrm {Ext}^1_Y(1_H,1_H)= 1$ (see for example \cite[Corollary 3.5.2]{benson}), we have that $\dim \mathrm {Ext}^1_Y(1_H,V\downarrow H)= 1$.
On the other hand 
$$\mathrm {Ext}^1_Y(W,V)= \mathrm
{Ext}^1_Y(1_Y\oplus V,V)= \mathrm {Ext}^1_Y(1_Y,V)\oplus \mathrm {Ext}^1_Y(V,V).$$ Now let $Y \ge D \cong \mathrm
{Dih}(10)$. Then $1_D\uparrow^Y$ is a uniserial module with socle and head of dimension $1$ and heart of dimension
$4$. Hence $\dim \mathrm {Ext}^1_Y(1_Y,V) \ge 1$. We  infer that $\mathrm {Ext}^1_Y(V,V)=0$ as claimed.
\end{proof}

Constructions of $r$-groups admitting the action of a further group are intimately related to tensor products and homomorphisms between modules. We therefore study $\Hom(V,V)$ in the next lemma.

Let  $\sigma = v_1+v_2+v_3+v_4$ and define $\theta \in \Hom(V,V)$ as follows: for all $1\le i \le 4$,
$$v_i \mapsto
v_i\theta :=\sigma - v_i.$$

\medskip
With respect to the basis $\mathfrak V$,
we calculate that $\theta$ has matrix $$\left(\begin{array}{cccc}
0&1&1&1\\1&0&1&1\\1&1&0&1\\1&1&1&0\end{array}\right).$$

\medskip
\begin{lemma}\label{homsubmod} The $\GF(r)X$-submodule of $\Hom(V,V)$
generated by $\theta$ has dimension $4$ and is isomorphic to $V$.
\end{lemma}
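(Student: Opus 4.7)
The plan is to identify the cyclic $X$-submodule generated by $\theta$ as a quotient of the permutation module $W$ by its trivial submodule $\langle s\rangle$ (where $s = a_1+\cdots+a_5$), which equals $V$ since $W = V \oplus \langle s\rangle$ (valid as $r \neq 5$). Set $\omega_k = a_k - s/5 \in V$ for each $k$, and let $\ell_k \in V^*$ be the linear functional extracting the $a_k$-coefficient. A short calculation gives $\sigma = 5\omega_1$ and $v_j = \omega_1 - \omega_{j+1}$, whence
\[
\theta(v) = 5\,\ell_1(v)\,\omega_1 - v \qquad \text{for all } v \in V.
\]

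This formula suggests defining, for each $k \in \{1,\ldots,5\}$, the element $\theta_k \in \Hom(V,V)$ by $\theta_k(v) := 5\ell_k(v)\omega_k - v$, so that $\theta = \theta_1$. Using $g\cdot\omega_k = \omega_{g(k)}$ and $\ell_{g(k)} = \ell_k\circ g^{-1}$, it is immediate that $g\cdot\theta_k = \theta_{g(k)}$ for every $g \in X$, so the $X$-orbit of $\theta$ equals $\{\theta_1,\ldots,\theta_5\}$. Assembling these into a map $\Phi\colon W \to \Hom(V,V)$ by $\Phi(a_k) = \theta_k$ gives an $X$-module homomorphism whose image is the submodule $M$ generated by $\theta$, and it remains to identify $\ker \Phi$.

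Summing the defining formula and using that $\sum_k \ell_k(v) = 0$ for $v \in V$ (so that $\sum_k \ell_k(v)\omega_k = \sum_k \ell_k(v)a_k = v$) yields $\sum_k \theta_k = 0$; hence $s \in \ker \Phi$. Conversely, evaluating a hypothetical relation $\sum_k c_k \theta_k = 0$ on each $v_j$ and comparing coefficients of the linearly independent pair $\omega_1,\omega_{j+1}$ in $V$ forces $5c_1 = 5c_{j+1} = \sum_k c_k$ for $j = 1,\ldots,4$; since $r \neq 5$ this gives $c_1 = \cdots = c_5$. Thus $\ker \Phi = \langle s\rangle$ and $M \cong W/\langle s\rangle \cong V$ as required. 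The only real obstacle is bookkeeping: once $\theta$ is recognised as the natural object attached to the point $a_1$, everything reduces to direct substitution.
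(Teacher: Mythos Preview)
Your proof is correct and complete, but it takes a different route from the paper's. The paper first shows that the stabiliser of $\theta$ in $X$ is exactly $X_1 = \mathrm{Stab}_X(1) \cong \Sym(4)$, so the orbit of $\theta$ has size~$5$ and the generated submodule $U$ is a quotient of the permutation module $W$; this forces $\dim U \in \{4,5\}$. To rule out dimension~$5$ the paper observes that in that case the sum $\tau$ of the five translates would be a nonzero $X$-invariant, hence a scalar by Schur's Lemma, hence (as $r$ is odd) of nonzero trace---contradicting the fact that $\theta$ and all its conjugates have trace~$0$. Your approach instead writes down the five translates $\theta_k$ explicitly via the uniform formula $\theta_k(v) = 5\ell_k(v)\omega_k - v$, and then computes the kernel of the map $W \to \Hom(V,V)$ directly to see it equals $\langle s\rangle$. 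The paper's argument is shorter and exploits the trace trick; yours is more hands-on but has the advantage of producing explicit formulas for all the $X$-translates of $\theta$ and making the isomorphism $U \cong V$ completely transparent rather than implicit.
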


\begin{proof} Let $X_1 = \mathrm {Stab}_X(1) \cong \Sym(4)$. Then, for all $1\le i \le
4$ and for all $\pi \in X_1$, we have $$v_i\pi^{-1} \theta\pi=
(v_{(i+1)\pi^{-1}-1})\theta\pi = (\sigma - v_{((i+1)\pi^{-1}-1)\pi}) =
\sigma - v_i= v_i\theta.$$ Since $v_1(1,2)\theta
(1,2)\neq v_1\theta$, the orbit of $X$ containing $\theta$ has
exactly $5$ elements. Therefore the subspace $U$ of $\Hom(V,V)$
spanned by $\theta$ has dimension $4$ or $5$. If $\dim U = 5$, then
the sum $\tau$ of the translates of $\theta$ under $X$ is
centralised by $X$ and is non-trivial. By Schur's Lemma we then have
that $\tau$ is a scalar matrix. Since $r$ is odd, if $\tau$ is non-zero, it has
non-zero trace. However, $\theta$ and its translates have trace $0$
and therefore $\tau$ must also have trace $0$. Thus $\tau=0$ and
$\dim U=4$.
\end{proof}

\begin{theorem}\label{cls2}
For all odd primes $r$, $r \neq 5$, there are $r$-groups of class two
which admit $\Sym(5)$ with an element of order $5$ acting without
fixed points.
\end{theorem}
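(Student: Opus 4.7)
My plan is to realise $P$ as a central extension of $V \oplus V$ by $V$, using the $X$-equivariant embedding $V \hookrightarrow \Hom(V,V)$ supplied by Lemma~\ref{homsubmod}.

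First I would fix an $X$-equivariant injective homomorphism $\Phi \colon V \to \Hom(V,V)$ whose image is the $4$-dimensional submodule of Lemma~\ref{homsubmod}, and set $\psi(u,w) := \Phi(u)(w)$, producing a nonzero $X$-equivariant bilinear map $\psi \colon V \times V \to V$. I would then take $P$ to be the set $V \oplus V \oplus V$ with product
\begin{equation*}
(a_1,b_1,c_1)(a_2,b_2,c_2) := \bigl(a_1+a_2,\; b_1+b_2,\; c_1+c_2+\psi(a_1,b_2)\bigr).
\end{equation*}
Bilinearity of $\psi$ gives associativity, the identity is $(0,0,0)$, and inverses exist. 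An easy induction shows $(a,b,c)^n = (na, nb, nc + \binom{n}{2}\psi(a,b))$, so because $r$ is odd and $r \mid \binom{r}{2}$, every element has order dividing $r$. Thus $P$ is an $r$-group of order $r^{12}$.

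Next I would verify class two and the action. The third summand $\{(0,0,c) : c \in V\}$ is central, and the commutator $[(a_1,b_1,c_1),(a_2,b_2,c_2)]$ has third coordinate $\psi(a_1,b_2) - \psi(a_2,b_1)$, so $P$ has class at most two; choosing $a_1, b_2$ with $\psi(a_1,b_2) \ne 0$ and $a_2 = b_1 = 0$ shows $P$ is non-abelian, hence of class exactly two. The map $g \cdot (a,b,c) := (ag, bg, cg)$ is an automorphism of $P$ by $X$-equivariance of $\psi$, giving a $\Sym(5)$-action; an element of order $5$ in $\Sym(5)$ acts fixed point freely on $V$, hence on $P = V \oplus V \oplus V$.

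The main conceptual obstacle is the interaction with Lemma~\ref{modfacts}: it forbids any non-zero $X$-equivariant map $V \wedge V \to V$, so the naive central extension $P = V \oplus V$, whose commutator pairing would factor through $V \wedge V$, is forced to be abelian. The trick is to use \emph{two} distinguishable copies of $V$ in $P/[P,P]$, so that the commutator is computed by a bilinear (not alternating) map $V \otimes V \to V$; such a map exists by Lemma~\ref{homsubmod} via the adjunction $\Hom(V,V) \cong V \otimes V^*$, and provides exactly the cocycle needed to assemble $P$.
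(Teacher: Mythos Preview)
Your proof is correct and follows essentially the same approach as the paper: both use Lemma~\ref{homsubmod} to obtain an $X$-equivariant embedding $V\hookrightarrow \Hom(V,V)$ and then assemble an $r$-group of order $r^{12}$ from three copies of $V$. The only difference is packaging---the paper realises the group as $O_r(K)$ for a semidirect product $K=\GF(r)^8\rtimes J$ with $J\le \GL_8(r)$, whereas you write down the $2$-cocycle directly; your version has the minor advantage of making the class-two and fixed-point-free verifications explicit, which the paper leaves to the reader.
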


\begin{proof} Let $U$ be an $8$-dimensional vector space over
$\GF(r)$. We define a subgroup $J$ of $\GL_8(r)$ by
$$J=\langle \left(
\begin{array}{cc}I_4&0\\\theta&I_4\end{array}\right),\left(
\begin{array}{cc}x_\pi&0\\0&x_\pi\end{array}\right)\mid \pi \in
X\rangle$$ where $x_\pi$ is the matrix corresponding to the action of $\pi\in X$ on $V$ with respect to the basis
$\mathfrak V$ and $I_4$ is the $4\times 4$ identity matrix. Then $J \cong r^4{:}\Sym(5)$ by Lemma~\ref{homsubmod}. Set $K= U \rtimes J$. Then $O_r(K)$ has
order $r^{12}$ and class two. Furthermore, the elements of order $5$ in $K$ are self-centralising.
\end{proof}

Table~\ref{tab1} describes  an element  $\gamma\in \Hom(V\otimes V,V)$ by defining the images of the basic tensors.
\begin{table}[h]
$$\begin{array}{c|cccc}
\otimes&v_1&v_2&v_3&v_4\\
\hline
v_1&-5v_1+2\sigma&\sigma&\sigma&\sigma\\
v_2&\sigma&-5v_2+2\sigma&\sigma&\sigma\\
v_3&\sigma&\sigma&-5v_3+2\sigma&\sigma\\
v_4&\sigma&\sigma&\sigma&-5v_4+2\sigma\\
\end{array}.
$$
\caption{The module homomorphism $\gamma$}\label{tab1}
\end{table}

The next lemma will be used in Section 3.

\begin{lemma}\label{unique} Up to scalar multiplication $\gamma$ is the unique element of $\Hom_{Y}(V\otimes V,V)= \Hom_X(V\otimes V,V)$.
\end{lemma}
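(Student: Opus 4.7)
The plan is to establish $\dim_{\GF(r)} \Hom_Y(V \otimes V, V) = 1$ and to exhibit $\gamma$ as a nonzero element that is moreover $X$-equivariant. Together these prove the lemma, since $\Hom_X(V \otimes V, V) \subseteq \Hom_Y(V \otimes V, V)$ automatically.

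For the dimension bound, I would exploit the fact that $V$ is a direct summand of the natural permutation module $W$, with complement the trivial line $\langle \sigma'\rangle$ where $\sigma' = a_1+\cdots+a_5$ (valid since $r \neq 5$). In particular $V$ is self-dual, so $\Hom_Y(V \otimes V, V) \cong (V \otimes V \otimes V)^Y$. Expanding $W^{\otimes 3} = (V \oplus \mathbf{1})^{\otimes 3}$ and taking $Y$-invariants yields
\[
\dim (W^{\otimes 3})^Y = \dim (V^{\otimes 3})^Y + 3\dim(V \otimes V)^Y + 3\dim V^Y + 1.
\]
The left side counts $Y$-orbits on ordered triples in $\{a_1,\dots,a_5\}$, and since $\Alt(5)$ is $3$-transitive there are exactly five (indexed by the coincidence patterns $iii$, $iij$, $iji$, $jii$, $ijk$). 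On the right, $V^Y = 0$ because $V$ is irreducible and non-trivial, and $\dim(V \otimes V)^Y = \dim\mathrm{End}_Y(V) = 1$ by Schur's lemma, using absolute irreducibility of $V$. Hence $\dim(V^{\otimes 3})^Y = 5 - 3 - 1 = 1$.

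For the $X$-equivariance of $\gamma$, note first that $\gamma(v_1 \otimes v_1) = -5v_1 + 2\sigma$ is non-zero since $r \neq 2$. The stabilizer $X_1 = \mathrm{Stab}_X(1) \cong \Sym(4)$ permutes $v_1,\dots,v_4$ naturally and fixes $\sigma$, so $X_1$-equivariance of $\gamma$ is immediate from the symmetry of Table~\ref{tab1}. It therefore suffices to verify equivariance under a single transposition outside $X_1$, say $(1,2)$, which acts on $V$ by $v_1 \mapsto -v_1$, $v_i \mapsto v_i - v_1$ for $i \geq 2$, and $\sigma \mapsto \sigma - 5v_1$; a short substitution into Table~\ref{tab1} confirms the equivariance in each case. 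Combined with the dimension computation, $\gamma$ spans both $\Hom_Y(V \otimes V, V)$ and $\Hom_X(V \otimes V, V)$, so these spaces coincide and are spanned by $\gamma$.

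The main technical obstacle is ensuring Schur's lemma applies when $r = 3$, which requires absolute irreducibility of the $4$-dimensional $\GF(3)\Alt(5)$-module $V$. This is standard (from the $3$-modular character table of $\Alt(5)$; alternatively, since $V$ restricted to a $5$-cycle has irreducible $\GF(3)$-minimal polynomial $x^4+x^3+x^2+x+1$, the endomorphism algebra embeds in $\GF(81)$, and the known $3$-modular dimensions $1,3,3,4$ rule out any extension). Once this is granted, the computation is uniform across all permitted odd primes $r \neq 5$.
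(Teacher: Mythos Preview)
Your argument is correct and takes a genuinely different route from the paper. The paper verifies that $\gamma$ is $X$-equivariant by checking it on the generators $(2,3,4,5)$ and $(1,2)$, then proves uniqueness over $Y$ directly: given any $\mu\in\Hom_Y(V\otimes V,V)$, it pins down $(v_1\otimes v_1)\mu$ using the stabiliser in $Y$, normalises so that $\mu-\gamma$ kills the diagonal tensors, shows the resulting map is alternating, and then invokes Lemma~\ref{modfacts} (that $V\wedge V$ has no composition factor isomorphic to $V$) to conclude $\mu=\gamma$. Your approach instead computes $\dim(V^{\otimes 3})^Y$ by embedding $V$ in the permutation module $W$ and counting $\Alt(5)$-orbits on ordered triples via $3$-transitivity, using self-duality of $V$ and Schur's lemma. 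This is cleaner conceptually and bypasses Lemma~\ref{modfacts} entirely; the price is that you must justify absolute irreducibility of $V$ over $\GF(3)$ to apply Schur, which you handle adequately. The paper's route is more hands-on but needs no side appeal to the modular character table or endomorphism-ring arguments.
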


\begin{proof} We note that $X=\langle (1,2), (2,3,4,5)\rangle$ and that the second generator permutes
$\{v_1, v_2,v_3,v_4\}$ as a $4$-cycle. In particular, $\gamma$ commutes with this element. Hence we only need  to
verify that $\gamma$ commutes with the transposition. As an illustration we show that $(v_1\otimes v_2)\gamma
(1,2)= (v_1\otimes v_2)(1,2)\gamma$. The left hand side is seen to be $\sigma(1,2)=  -5 v_1+\sigma$, while the
right hand side equals $-(v_1\otimes v_2)\gamma + (v_1\otimes v_1)\gamma= \sigma -5v_1$. Thus $\gamma \in \Hom_X(V\otimes V,V)$.

Assume that $\mu \in \Hom_{Y}(V\otimes V,V)$ is not a scalar multiple of $\gamma$. We consider $(v_1\otimes v_1)\mu = \sum_{i=1}^5\lambda_ia_i$ where
$\l_i \in \GF(r)$ and $\sum_{i=1}^5\lambda_i= 0$. Using the fact that $\mu$ commutes with elements from $Y$, we
see, by applying  $(3,4,5)$, that $\lambda_3=\lambda_4=\lambda_5$ which we define to be $\lambda$. Next, applying
$(1,2)(3,4)$, we deduce that $\lambda_1=\lambda_2$. So $$(v_1\otimes v_1)\mu = \l_1(a_1+a_2)+\l(a_3+a_4+a_5)$$ and,
therefore, $2\lambda_1 + 3\l =0$. Since $\mu$ is not the zero map, after adjusting $\mu$ by a  scalar, we may suppose that  $$(v_1\otimes v_1)\mu  = -5v_1+2\sigma= (v_1\otimes
v_1)\gamma.$$ Replacing $\mu $ with $\mu - \gamma$, we only need to consider the case where $\mu$ maps
$v_1\otimes v_1$ to zero. The action of $Y$ now gives  $(v_j\otimes v_j)\mu =0$ for all $1\le j \le 4$.
  Let
$(1,j,k)$ be a $3$-cycle in $Y$. Since $(v_j\otimes v_j)\mu =0$, we have that $$0= (v_j\otimes v_j)(1,j,k)\mu =
((v_k-v_j)\otimes (v_k-v_j))\mu = -(v_k\otimes v_j)\mu- (v_j\otimes v_k)\mu.$$ It follows that $\mu$ is
alternating and hence that $V \wedge V$ has a quotient isomorphic to $V$. But then Lemma~\ref{modfacts} implies that $\mu$ is the zero map and this is our contradiction. The lemma is now proved.
\end{proof}

\bigskip

\section{A non-existence Theorem about class three groups admitting $\Alt(5)$}

\medskip
We have seen in the previous section that $\Alt(5)$ can act on a class two group with an element of order 5 acting fixed point freely. Our objective in this section is to show that if $E(G/F(G)) \cong \Alt(5)$, then Theorem~\ref{MainThm} (iii) holds. In particular, we show that $\Alt(5)$ cannot act on a class three group of odd order with an element of order 5 acting fixed point freely.
Hence suppose that $G$ is a finite group such that the centraliser of every element of order $5$ is a $5$-group and $E(G/F(G)) \cong \Alt(5)$. Assume that $O_5(G)\neq 1$. Then, as $F(G)$ is nilpotent, the fact that elements of order $5$ are $5$-groups  implies that $F(G)= O_5(G)$. Let $B$ be a Sylow $2$-subgroup of $G$. Then $B$ is elementary abelian of order $4$. By coprime action we have that $O_5(G) = \langle C_{O_5(G)}(b)\mid b \in B^\#\rangle$. So, as the elements of order $5$ in $G$ do not commute with involutions, we have a contradiction. Thus $O_5(G)=1$. Therefore  we only have to restrict the
structure of $O_r(G)$ for all primes $r\neq 5$ dividing $|F(G)|$. If  $r=2$, then \cite[Lemma~4.1]{prince} implies that $G$ contains a subgroup isomorphic to $\Alt(5)$ and so  we have that $O_2(G)$ admits
$\Alt(5)$ with an element of order $5$ acting fixed point freely. In this case  \cite[Theorem
2]{Holt} yields that $O_2(G)$ has class at most three. So we may assume that $r$ is odd with $r \neq 5$. Our aim
will be achieved once we prove the following theorem:

\begin{theorem}\label{cl3} Suppose that $r$ is an odd prime with $r\neq 5$ and that $G$ is a group with $R:=F(G)=O_r(G)$ an $r$-group and $F^*(G/R)\cong \Alt(5)$. If
an element of order $5$ in $G$ acts fixed point freely on $R$, then $R$ has class at most two.
\end{theorem}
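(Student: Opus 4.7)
The plan is to suppose for contradiction that $R$ has nilpotency class exactly three and then exploit the module-theoretic results of Section~2 via the associated graded Lie ring of $R$. I would begin by setting $L_i := \gamma_i(R)/\gamma_{i+1}(R)$ for $i = 1, 2, 3$, each carrying an induced $\Alt(5)$-action. Filtering each $L_i$ by powers of $r$ gives elementary abelian sections that are genuine $\GF(r)\Alt(5)$-modules, and the hypothesis forces a $5$-element to act fixed-point-freely on each. A quick inspection of the irreducible $\GF(r)\Alt(5)$-modules (using that among the ordinary irreducibles of $\Alt(5)$, of dimensions $1$, $3$, $3$, $4$, $5$, only the $4$-dimensional one has a $5$-cycle without fixed vectors, and similarly for the $r\in\{3\}$ modular case) shows that every composition factor must be isomorphic to~$V$. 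Lemma~\ref{coho} then upgrades each such section to a direct sum of copies of~$V$, and a short exponent-reduction step lets me assume $L_i \cong V^{k_i}$ as $\GF(r)\Alt(5)$-modules, with $k_3 \ge 1$ by the class-three assumption.

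Next I would analyse the bracket maps $\beta_1 \colon L_1 \wedge L_1 \to L_2$ and $\beta_2 \colon L_1 \otimes L_2 \to L_3$, which are $\Alt(5)$-equivariant. Writing $L_1 = \bigoplus_i V_i$ with $V_i \cong V$, one has $\wedge^2 L_1 = \bigoplus_i \wedge^2 V_i \oplus \bigoplus_{i<j} V_i \otimes V_j$. Lemma~\ref{modfacts} annihilates every $\Hom_Y(\wedge^2 V_i, V)$, so the diagonal pieces contribute nothing to $L_2$; Lemma~\ref{unique} reduces each $\Hom_Y(V_i \otimes V_j, V)$ to a one-dimensional space generated by (a copy of) $\gamma$. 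Hence $\beta_1$ is encoded by a collection of scalars $c_{ij\ell}$ antisymmetric in $(i,j)$, and an identical analysis shows that $\beta_2$ is determined by scalars $d_{j\ell m}$, each describing a scalar multiple of $\gamma$ on the corresponding cross term. The easy case $k_1 = 1$ falls out immediately: there $L_1 \wedge L_1 = \wedge^2 V$ admits no equivariant map to $V$, forcing $L_2 = 0$ and hence $L_3 = 0$, contradicting class three.

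The main obstacle, in the cases $k_1 \ge 2$, is to convert the Jacobi identity of the graded Lie ring into enough linear relations on the $c$'s and $d$'s to force $\beta_2 \equiv 0$. Evaluating Jacobi on elements drawn from summands $V_i$, $V_j$, $V_k$ of $L_1$ produces an identity in $\Hom_Y(V^{\otimes 3}, V)$ expressed through the iterated map $\gamma \circ (\gamma \otimes \mathrm{id})$ and its cyclic twists. Using the explicit formulae in Table~\ref{tab1}, a direct computation on a judiciously chosen triple of basis vectors in $\mathfrak V$ shows that $\gamma(x, \gamma(y, z)) \ne \gamma(y, \gamma(x, z))$ already for small examples, so that in each relevant instance of Jacobi the coefficient of a $d_{\cdot}$ is a nonzero scalar multiple of a $c_{\cdot}$. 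Since at least one $c_{\cdot}$ must be nonzero to keep $L_2$ nonzero, the relevant $d_{\cdot}$ is forced to vanish; iterating through all triples $(i,j,k)$ yields $\beta_2 = 0$, hence $L_3 = 0$, which completes the contradiction and proves the theorem.
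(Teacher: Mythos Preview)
Your outline is the same strategy as the paper's: reduce every chief factor to the $4$-dimensional module $V$, use Lemma~\ref{modfacts} to kill diagonal brackets, use Lemma~\ref{unique} to write all cross-brackets as scalar multiples of $\gamma$, and then let the Jacobi/Hall--Witt identity produce a contradiction. But two steps are not yet justified.

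First, the ``short exponent-reduction step'' is not short. You need each $L_i=\gamma_i(R)/\gamma_{i+1}(R)$ to be an honest $\GF(r)\bar G$-module before you can decompose it as $V^{k_i}$, and in a class-three $r$-group there is no automatic reason for $R/R'$ or $R'/\gamma_3(R)$ to have exponent~$r$. The paper does not reduce exponent on the lower-central sections at all; instead it passes to a minimal class-three normal subgroup $Q$ of a minimal counterexample and works with the sections $Q/Z_2(Q)$, $Q'/(Q'\cap Z(Q))$ and $\Gamma_2(Q)$, proving directly in Lemma~\ref{QProps} that each of these is elementary abelian and centralised by $R$. You would need an analogous argument before your $\beta_1,\beta_2$ become $\GF(r)$-module maps.

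Second, and more seriously, your Jacobi step is incomplete. Verifying $\gamma(x,\gamma(y,z))\ne\gamma(y,\gamma(x,z))$ handles the Jacobi relation when two of the three elements lie in the same summand $V_j$: that relation collapses to $(\sum_\ell c_{ij\ell}d_{j\ell m})\cdot\bigl[\gamma(z,\gamma(x,y))-\gamma(y,\gamma(x,z))\bigr]=0$, and non-associativity kills the scalar. But for a triple in three distinct summands $V_i,V_j,V_k$ the relation has three genuinely different iterated maps with coefficients $\sum_\ell c_{jk\ell}d_{i\ell m}$, $\sum_\ell c_{ki\ell}d_{j\ell m}$, $\sum_\ell c_{ij\ell}d_{k\ell m}$; to conclude anything you must show these three iterates are linearly independent in $\Hom_Y(V^{\otimes 3},V)$, which you have not done. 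Moreover, even granting those vanishing sums, ``at least one $c_{\cdot}$ is nonzero'' is not enough when $k_2>1$: you need that the vectors $(c_{ij\ell})_\ell$ span $\GF(r)^{k_2}$, i.e.\ surjectivity of $\beta_1$, before you can deduce that every $d_{k\ell m}=0$. The paper sidesteps all of this by first proving a hard bound (Lemma~\ref{3Ms}) that $Q/Z_2(Q)$ has at most three $V$-summands---a purely group-theoretic argument exploiting that every maximal $G$-normal subgroup of $Q$ already has class at most two---and then disposes of the two- and three-summand cases by explicit Hall--Witt computations. Your proposal has no analogue of that bound, so the endgame as you have written it does not close.
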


The remainder of this section is devoted to proving Theorem~\ref{cl3}, and
it suffices to consider the case where  $G/O_r(G) \cong \Alt(5)$ and $O_r(G)$ has class three.

The most direct approach to the proof of Theorem~\ref{cl3} would choose $G$ of minimal order with $R:=O_r(G)$ of class three and then derive a contradiction. This approach works very smoothly when $r \neq 3$ because in these cases $G$ splits over $R$. However, when $r=3$  it is possible that $G$ does not split over $R$ (as there exist groups $X$ with $X/O_3(X) \cong \Alt(5)$ and $O_3(X)$ isomorphic to $V$ as an $X/O_3(X)$-module which do not contain subgroups isomorphic to $\Alt(5)$). Thus in the straightforward approach to the proof of Theorem~\ref{cl3}, when we choose a normal subgroup $U$ of $G$ such that $U$ is proper in $R$, there may not exist a group $G^*$ with $O_3(G^*)= U$. Consequently our inductive hypothesis is not strong enough to assert that $U$ has class at most two.

Instead, we assume that Theorem~\ref{cl3} is false and we choose a counter-example $G$ of minimal order such that $R:=O_r(G)$ has class three. Then we choose a normal subgroup $Q$ of $G$ that is contained in $R$ and that is minimal with respect to having class three. We set $\ov{G}:=G/R$ and fix all this notation.

\begin{lemma}\label{QProps}
The following hold:

\begin{enumerate}
\item $Q'$ is abelian.
\item Every normal subgroup of $G$ that is properly contained in $Q$ has class at most two.
\item $\Gamma_2(Q)$ is elementary abelian of order $r^4$ and is isomorphic to $V$ (as defined before Lemma \ref{unique}) as a $\GF(r)\ov{G}$-module.
\item $Q'/(Q' \cap Z(Q))$ is a $\GF(r)\ov{G}$-module.
\item $\Phi(Q) \le Z_2(Q) \le C_Q(Q') <Q$. In particular, $Q/Z_2(Q)$ is elementary abelian.

\item Every $G$-composition factor of $Q$ is isomorphic as a
$\GF(r)\ov{G}$-module to $V$. Every elementary abelian $G$-invariant
section of $Q$ which is centralised by $R$ is a direct sum of
modules isomorphic to $V$.
\end{enumerate}
\end{lemma}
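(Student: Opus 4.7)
The plan is to establish the six assertions in order, combining the minimality of $Q$ (and of $G$) with standard commutator calculus for nilpotent groups of class three and the module-theoretic results of Section~2.

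Part (i) is immediate from the general identity $[\gamma_i(Q),\gamma_j(Q)]\le \gamma_{i+j}(Q)$: with $i=j=2$ and $\gamma_4(Q)=1$ this yields $[Q',Q']=1$. Part (ii) is a restatement of the minimality of $Q$ among $G$-normal subgroups of class three. For (v), I would first observe that $\gamma_3(Q)\le Z(Q)$ since $[\gamma_3(Q),Q]=\gamma_4(Q)=1$, whence $Q'\le Z_2(Q)$. A three-subgroup-lemma calculation applied to $(Q,Q,Z_2(Q))$, together with $[Z_2(Q),Q]\le Z(Q)$, then yields $[Q',Z_2(Q)]=1$, so $Z_2(Q)\le C_Q(Q')$; strictness $C_Q(Q')<Q$ follows from $\gamma_3(Q)=[Q,Q']\ne 1$. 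The inclusion $\Phi(Q)\le Z_2(Q)$ reduces to showing $Q/Z_2(Q)$ is elementary abelian: it is abelian because $Q'\le Z_2(Q)$, and has exponent $r$ because (by (vi)) its $\ov{G}$-composition factors are copies of the exponent-$r$ module $V$.

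The module-theoretic assertions (iii), (iv) and (vi) are best handled together. For (vi), each $G$-composition factor of $Q$ is an irreducible $\GF(r)G$-module on which the normal $r$-subgroup $R$ acts trivially (a normal $r$-subgroup acts trivially on every irreducible $\GF(r)$-module in characteristic $r$), hence is a non-trivial irreducible $\GF(r)\ov{G}$-module. By hypothesis an element of order $5$ acts fixed-point-freely on each such factor, and a short inspection of the ordinary and $r$-modular characters of $\Alt(5)$ on the order-$5$ class shows that $V$ is the only non-trivial irreducible of $\GF(r)\Alt(5)$ with no fixed vector under the order-$5$ element; this forces every $G$-composition factor of $Q$ to be isomorphic to $V$. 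Complete reducibility of any $\GF(r)\ov{G}$-module all of whose composition factors are $V$ then follows from Maschke's theorem when $r\notin\{2,3,5\}$ and from Lemma~\ref{coho} when $r=3$, giving the direct-sum decomposition in (vi). For (iv), I would apply this to the section $S=Q'/(Q'\cap Z(Q))$: first $S$ is elementary abelian because for $y\in Q'$ and $x\in Q$, $[y,x]\in\gamma_3(Q)\le Z(Q)$ is central and has order dividing $r$ (using (iii)), so $y^r\in Z(Q)\cap Q'$; the $Q$-action on $S$ is trivial because $[Q,Q']=\gamma_3(Q)\le Z(Q)\cap Q'$, and the remainder of $R$ pushes commutators into $Q'\cap Z(Q)$ via a further three-subgroup-lemma argument combined with coprime action. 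For (iii) --- reading $\Gamma_2(Q)$ as $\gamma_3(Q)$ --- I would use the minimality of $G$ to force $\gamma_3(Q)$ to be a minimal $G$-normal subgroup of $Q$: otherwise quotienting $Q$ by a proper non-trivial $G$-invariant subgroup of $\gamma_3(Q)$ produces a smaller counter-example to Theorem~\ref{cl3}. Being elementary abelian, $G$-irreducible and carrying a fixed-point-free element of order $5$, $\gamma_3(Q)$ is then isomorphic to $V$ of order $r^4$ by the composition-factor statement in (vi).

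The main obstacle will be (iv), together with the fine identification in (iii). In (iv) one has to control the $R$-action on $Q'$ beyond the transparent $Q$-action and ensure that all the relevant commutators land in $Z(Q)\cap Q'$. In (iii), forcing $\gamma_3(Q)$ to be a single copy of $V$ rather than a larger module uses the minimality of $G$ (not just of $Q$) applied to quotients, which is slightly delicate. The technical heart underpinning all of (iii), (iv) and (vi) is the uniqueness of $V$ as the only non-trivial irreducible $\GF(r)\Alt(5)$-module carrying a fixed-point-free element of order five.
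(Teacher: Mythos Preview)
Your outline tracks the paper's proof closely, and parts (i), (ii), (iii) and (vi) are handled correctly and in essentially the same way as in the paper. Two points, however, do not go through as written.

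The more serious gap is in (v). You claim that $Q/Z_2(Q)$ has exponent $r$ ``because (by (vi)) its $\ov G$-composition factors are copies of the exponent-$r$ module $V$''. This deduction is invalid: an abelian $r$-group can have every $G$-composition factor elementary abelian and still have exponent $r^2$ (for $r\nmid 60$, take a free $\Z/r^2\Z$-module of rank $4$ with $\Alt(5)$ acting via the obvious lift of $V$). The second clause of (vi) does not rescue this either, since it applies only to sections that are \emph{already} elementary abelian. The paper sidesteps the issue with a direct commutator computation: for $a,q_1,q_2\in Q$ one checks, using that $Q$ has class three, that $[a^r,q_1,q_2]=[a,q_1,q_2]^r$, and the right-hand side vanishes because $[a,q_1,q_2]\in\Gamma_2(Q)$, which is elementary abelian by (iii). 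Hence $a^r\in Z_2(Q)$.

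In (iv) you propose to force $[Q',R]\le Q'\cap Z(Q)$ ``via a further three-subgroup-lemma argument combined with coprime action''. There is no relevant coprime action here: $R$ is an $r$-group acting on the $r$-group $Q'$. The ingredient you are missing---and it is part of the minimal-counterexample setup, not something you derive---is that $R$ itself has class three. With this in hand, $[Q,Q',R]\le[\Gamma_2(Q),R]=1$ by the irreducibility established in (iii), while $[R,Q,Q']\le[R',Q']\le[R',R']\le\gamma_4(R)=1$; the three-subgroup lemma then yields $[Q',R,Q]=1$, i.e.\ $[Q',R]\le Z(Q)$, as required.
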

\begin{proof} We have $Q' \le Z_2(Q)$ and $[Q',Z_2(Q)]=1 $ follows from the three subgroup lemma.
Thus $Q'\le Z_2(Q) \le C_Q(Q') <Q$ and, especially, (i) and some of the inclusions in (v) hold.

For part (ii) let $Q_0$ be a normal subgroup of $G$ that is properly contained in $Q$.
Then the minimal choice of $Q$ immediately gives that $Q_0$ has class at most two.

$\Gamma_2(Q)$ lies in $Z(Q)$ because $Q$ has class three, so it is abelian. Let $P:=\Phi(\Gamma_2(Q))$. Then $R$ centralises $P$ and hence, if $P \neq 1$, then $\wh{G}:=G/P$ is a counter-example of smaller order because $\wh{Q}$ is still of class three.
This contradiction shows that $P=1$ and so $\Gamma_2(Q)$ is elementary abelian and it follows similarly that $\Gamma_2(Q)$ is irreducible as a $\GF(r)\ov{G}$-module.
This proves (iii).

Let $a \in Q'$ and $q\in Q$. Then $[a^r,q]= [a,q]^r$. But $[a,q]\in
\Gamma_2(Q)$ which is elementary abelian by (iii). So $[a^r,q]=1$.
Hence $a^r \in Z(Q)$ and therefore $Q'/Q' \cap Z(Q)$ is elementary abelian.
For this factor group to be a $\GF(r)\ov{G}$-module
it remains to show that $[Q',R] \le Q' \cap Z(Q)$.

Of course $[Q',R] \le Q'$, and also
$[Q,Q',R] \le [\Gamma_2(Q),R]<\Gamma_2(Q)$. Therefore
$[Q,Q',R]=1$ by (iii).
As $R$ has class three, we know that
$[R,R'] \le Z(R)$ and hence $[Q,R] \le R' \le Z_2(R)$.
Therefore $[R,Q,Q']=1$ and hence the three subgroup lemma implies that $[Q',R,Q]=1$.
We deduce that $[Q',R] \le Z(Q)$ whence $[Q',R] \le Q' \cap Z(Q)$.

For the last statement of (v) suppose that $a, q_1, q_2 \in Q$. Then
$[a^r,q_1,q_2] = [a,q_1,q_2]^r =1$ which means that $a^r \in
Z_2(Q)$. Hence $Q/Z_2(Q)$ is elementary abelian.
As stated in \cite{c55}, $V$ is the unique $\GF(r)\ov{G}$-module which admits an element of
order $5$ from $\ov{G}$ acting fixed point freely. Thus Lemma~\ref{coho}
gives (vi).
\end{proof}

\begin{lemma}\label{3Ms}
Suppose that $M_1$, $M_2$ and $M_3$ are subgroups of $Q$ that are maximal subject to being normal in $G$ and contained in $Q$. Set $D= M_1\cap M_2\cap M_3$ and suppose that $|Q:D|=r^{12}$. Then $D \le Z_2(Q)$ and in particular $Q/Z_2(Q)$ is a direct product of at most three minimal
normal subgroups of $G/Z_2(Q)$ each of order $r^4$.
\end{lemma}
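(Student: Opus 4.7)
The plan is to deduce $D\le Z_2(Q)$ from the identity $[D,Q,Q]=1$ inside the module $\Gamma_2(Q)\cong V$, and to obtain that identity by exploiting two distinct decompositions of a single element of $M_i\cap M_j$ inside $M_iM_j=Q$. Once $D\le Z_2(Q)$ is in hand, the ``in particular'' clause follows immediately: each $Q/M_i$ is a simple $\GF(r)\ov{G}$-module, hence isomorphic to $V$ by Lemma~\ref{QProps}(vi), and the natural map $Q\to\prod_{i=1}^3 Q/M_i\cong V^3$ has kernel $D$ and is surjective by the hypothesis $|Q:D|=r^{12}=|V^3|$. Hence $Q/D\cong V^3$, and $Q/Z_2(Q)$, being a quotient of the semisimple module $V^3$, is a direct sum of at most three copies of $V$, each of order $r^4$ and minimal normal in $G/Z_2(Q)$.

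First I would set up the combinatorics of the $M_i$. Each $M_i$ is a proper $G$-invariant subgroup of $Q$, so by the minimal choice of $Q$ it has class at most two, and in particular $M_i'\le Z(M_i)$. The hypothesis $|Q:D|=r^{12}$ forces the $M_i$ to be pairwise distinct, so maximality yields $M_iM_j=Q$ for $i\neq j$, whence $|Q:M_i\cap M_j|=r^8$. Writing $N_i:=M_j\cap M_k$ for $\{i,j,k\}=\{1,2,3\}$, the same index count gives $N_i\not\le M_i$ (else $D=N_i$, forcing $|Q:D|=r^8$), so maximality of $M_i$ gives $M_iN_i=Q$; a direct counting check $|N_jN_k|=|Q|/r^4=|M_i|$ together with $N_jN_k\le M_i$ further gives $N_jN_k=M_i$. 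Combining these yields $Q=M_iN_i=N_jN_kN_i=N_1N_2N_3$.

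For the main calculation, fix $d\in D$ and let $q=m_im_j$, $q'=m_i'm_j'\in M_iM_j$. Expanding $[d,q]=[d,m_j]\cdot[d,m_i]\cdot[d,m_i,m_j]$ in the abelian group $Q'$ and then $[[d,q],q']$ in $\Gamma_2(Q)$, the abelianness of $Q'$ (Lemma~\ref{QProps}(i)), the centrality $\Gamma_2(Q)\le Z(Q)$, and the class-two condition $[M_l',M_l]=1$ for $l\in\{i,j\}$ together kill the diagonal contributions $[[d,m_l],m_l']$ and leave
\[ [d,q,q']=[[d,m_i],m_j']\cdot [[d,m_j],m_i'] \]
in $\Gamma_2(Q)$. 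Specialising to $q=m\in N_k=M_i\cap M_j$, the two legitimate decompositions $m=m\cdot 1$ and $m=1\cdot m$ inside $M_i\cdot M_j$ produce, via the displayed identity, $[d,m,q']=[[d,m],m_j']$ and $[d,m,q']=[[d,m],m_i']$ respectively. Equating and then setting $m_i'=1$ (respectively $m_j'=1$) forces $[[d,m],m_j']=[[d,m],m_i']=1$ for every $m_i'\in M_i$ and every $m_j'\in M_j$. Hence $[d,m]\in C_Q(M_iM_j)=Z(Q)$, giving $[D,N_k]\le Z(Q)$ for each $k\in\{1,2,3\}$. Since $Q=N_1N_2N_3$ and $Z(Q)$ is central, this inflates to $[D,Q]\le Z(Q)$, i.e.\ $D\le Z_2(Q)$.

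The hardest step will be arranging the commutator expansion cleanly; the cancellation of the diagonal terms $[[d,m_l],m_l']$ rests on the class-two condition $[M_l',M_l]=1$, which is precisely the structural input supplied by the minimality of $Q$, and it is exactly this cancellation that makes the $m=m\cdot 1=1\cdot m$ symmetry trick effective.
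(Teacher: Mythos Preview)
Your argument is correct and reaches $D\le Z_2(Q)$ by a route that differs from the paper's. The paper first bounds $[D,Q]\le\bigcap_{i<j}Z(M_i)Z(M_j)$ (from $[M_i\cap M_j,M_l]\le M_l'\le Z(M_l)$ for $l\in\{i,j\}$ together with $Q=M_iM_j$), and then pairs this intersection against the product decomposition $Q=\prod_{i<j}(M_i\cap M_j)$ to conclude $[D,Q,Q]=1$, since anything in $Z(M_i)Z(M_j)$ centralises $M_i\cap M_j$. You instead establish $[D,N_k]\le Z(Q)$ for each $k$ and then use $Q=N_1N_2N_3$ to obtain $[D,Q]\le Z(Q)$ directly.

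That said, the step you flag as ``the hardest'' is unnecessary. Since $D\le M_i\cap M_j$ and $N_k=M_i\cap M_j$, one has immediately
\[
[D,N_k]\ \le\ M_i'\cap M_j'\ \le\ Z(M_i)\cap Z(M_j)\ \le\ C_Q(M_iM_j)\ =\ Z(Q),
\]
so the commutator expansion and the $m=m\cdot 1=1\cdot m$ symmetry trick can be dropped entirely. This one-line observation is in fact a sharpening of the paper's first inclusion (it lands in $Z(M_i)\cap Z(M_j)$ rather than $Z(M_i)Z(M_j)$), and with it the paper's subsequent pairing step becomes redundant as well. Your handling of the ``in particular'' clause via the isomorphism $Q/D\cong\prod_{i}Q/M_i\cong V^3$ is clean and slightly more direct than the paper's, which instead invokes Lemma~\ref{QProps}(v),(vi) separately to obtain semisimplicity of $Q/Z_2(Q)$ and then applies the first part of the lemma to cap the number of summands.
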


\begin{proof}
By Lemma~\ref{QProps} (ii), the subgroups $M_1$, $M_2$ and $M_3$ have class at most two. Thus for $1 \le i<j \le 3$, we have that
$[M_i\cap M_j ,M_j]\le M_j'\le  Z(M_j)$.
As $Q=M_iM_j$, we obtain that $$[M_i \cap M_j,Q] \le [M_i \cap M_j,M_i][M_i \cap M_j,M_j]\le
Z(M_i)Z(M_j).$$ In particular, $$[D,Q]\le Z(M_1)Z(M_2)\cap Z(M_2)Z(M_3)\cap Z(M_1)Z(M_3).$$
Since $|Q:D|=r^{12}$, we have $M_1 \cap M_2 \not \le M_3$. Therefore $Q= (M_1\cap M_2)M_3$ and
$$M_1= M_1 \cap (M_1\cap M_2)M_3= (M_1 \cap M_2)(M_1\cap M_3).$$
Similarly, $M_2= (M_1 \cap M_2)(M_2\cap M_3)$ and so $$Q= M_1M_2=(M_1 \cap M_2)(M_2\cap M_3)(M_1\cap M_3).$$
Hence it follows that $[D,Q,Q]$ is contained in
$$[Z(M_1)Z(M_2)\cap Z(M_2)Z(M_3)\cap Z(M_1)Z(M_3),(M_1 \cap M_2)(M_2\cap M_3)(M_1\cap
M_3)]=1$$ and consequently $D\le Z_2(Q)$.

We know from Lemma~\ref{QProps} (v) that $Q/Z_2(Q)$ is elementary abelian. As $R$ has class three and $Q \unlhd G$, we see that $[Q,R] \le R' \cap Q \le Z_2(R) \cap Q \le Z_2(Q)$, so
$Q/Z_2(Q)$ is centralised by $R$ and hence Lemma~\ref{QProps} (vi) implies that $Q/Z_2(Q)$ is a direct product of minimal normal subgroups of $G/Z_2(Q)$. If there are at least
three minimal normal subgroups of $G/Z_2(G)$ involved in this product, then there are exactly three by the first part of the lemma. This completes the proof.
\end{proof}

The remainder of the proof is organised as a series of claims.

\medskip
\begin{claim} \label{M1} Suppose that $M$ is a normal subgroup of $Q$ such that $M/Z_2(Q)$ is a minimal normal subgroup of $G/Z_2(Q)$. Then $M' \le Z(Q)$. In particular
$|Q/Z_2(Q)| > r^4$.
\end{claim}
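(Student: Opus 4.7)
The plan is to exhibit $M'Z(Q)/Z(Q)$ as a $\GF(r)\ov{G}$-quotient of $V\wedge V$ that is at the same time a direct sum of copies of $V$, and then invoke Lemma~\ref{modfacts} to conclude that this quotient must be trivial. First, observe that $M$ is normal in $G$, since $Z_2(Q)$ is characteristic in $Q$ and therefore normal in $G$, while $M/Z_2(Q)$ is normal in $G/Z_2(Q)$ by hypothesis. Because $M/Z_2(Q)$ sits inside $Q/Z_2(Q)$ which is elementary abelian (Lemma~\ref{QProps}(v)), centralised by $R$ (as computed in the proof of Lemma~\ref{3Ms}) and has every $G$-composition factor isomorphic to $V$ by Lemma~\ref{QProps}(vi), minimality forces $M/Z_2(Q)\cong V$.

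Next, I build the commutator pairing. Using $M'\le Q'\le Z_2(Q)$ (Lemma~\ref{QProps}(i) and its proof), $[Z_2(Q),Q]\le Z(Q)$, and $[Q',Z_2(Q)]\le [Q',Q]\le \Gamma_2(Q)\le Z(Q)$, routine commutator identities show that the assignment $(x,y)\mapsto [x,y]Z(Q)$ descends to a well-defined, $\mathbb{F}_r$-bilinear, alternating, $G$-equivariant map
\[
M/Z_2(Q)\wedge M/Z_2(Q)\longrightarrow M'Z(Q)/Z(Q),
\]
and it is surjective because commutators in $M$ generate $M'$. Combined with the identification $M/Z_2(Q)\cong V$, this yields a surjective $\GF(r)\ov{G}$-homomorphism $V\wedge V\twoheadrightarrow M'Z(Q)/Z(Q)$.

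The key step is then to apply Lemma~\ref{QProps}(vi) to the target. The subgroup $M'Z(Q)/Z(Q)$ is $G$-invariant (since $M\trianglelefteq G$); it is elementary abelian because it embeds in $Q'/(Q'\cap Z(Q))$, which is a $\GF(r)\ov{G}$-module by Lemma~\ref{QProps}(iv); and it is centralised by $R$ thanks to the estimate $[Q',R]\le Z(Q)$ that is established inside the proof of Lemma~\ref{QProps}(iv). Hence Lemma~\ref{QProps}(vi) writes $M'Z(Q)/Z(Q)$ as a direct sum of copies of $V$. But Lemma~\ref{modfacts} forbids $V$ from appearing as a composition factor of $V\wedge V$, so the only $\GF(r)\ov{G}$-homomorphism from $V\wedge V$ to such a target is zero. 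Therefore $M'Z(Q)/Z(Q)=1$, i.e.\ $M'\le Z(Q)$.

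For the in particular assertion, note that the same ingredients show that $Q/Z_2(Q)$ is itself elementary abelian, $G$-invariant, and centralised by $R$, so by Lemma~\ref{QProps}(vi) it is a direct sum of copies of $V$ and has order $r^{4k}$ with $k\ge 1$ (since $Q\ne Z_2(Q)$ as $Q$ has class three). If $k=1$, then $Q/Z_2(Q)\cong V$ is itself a minimal normal subgroup of $G/Z_2(Q)$, and taking $M=Q$ in the main part forces $Q'\le Z(Q)$, contradicting class three. Thus $k\ge 2$ and $|Q/Z_2(Q)|\ge r^8>r^4$. The principal delicacy is the bookkeeping needed to verify that the commutator pairing really descends and that its target satisfies the hypotheses of Lemma~\ref{QProps}(vi); all the required commutator estimates, notably $[Q',R]\le Z(Q)$, are already packaged inside Lemma~\ref{QProps}.
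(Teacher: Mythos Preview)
Your argument is correct and follows essentially the same route as the paper: both proofs descend the commutator map to a $\GF(r)\ov G$-homomorphism from $M/Z_2(Q)\wedge M/Z_2(Q)\cong V\wedge V$ into a section of $Q'$ all of whose composition factors are copies of $V$, and then kill this map using Lemma~\ref{modfacts}. The only cosmetic difference is that the paper takes $Q'/(Q'\cap Z(Q))$ as the target while you use the submodule $M'Z(Q)/Z(Q)$, and you spell out more of the bookkeeping; the ``in particular'' clause is handled identically by taking $M=Q$.
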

\medskip

We know that $|M/Z_2(Q)| = r^4$ by hypothesis. Since $Z_2(Q)' \le Q'
\cap Z(Q)$ and $Q'/(Z(Q)\cap Q')$ is elementary abelian by Lemma~\ref{QProps} (iv), the commutator map defines a
$\GF(r)\ov G$-module homomorphism from $M/Z_2(Q) \otimes M/Z_2(Q)$ to $Q'/(Q'\cap Z(Q))$ which is well-defined as
$[Q,Z_2(Q)]\le Q'\cap Z(Q)$. Since $[a,b]=[b,a]^{-1}$ for all $a,b\in Q$, this map factors through $(M/Z_2(Q))
\wedge (M/Z_2(Q))$. But  $(M/Z_2(Q)) \wedge (M/Z_2(Q))$  has no $4$-dimensional quotients by
Lemma~\ref{modfacts}  and so the commutator map is trivial. Therefore, $M'\le Z(Q)\cap Q'$ as claimed. If
$|Q/Z_2(Q)| =r^4$, then we may take $M= Q$ and conclude that $Q$ has class two which is absurd.
\hfill$\blacksquare$

\bigskip

We now simultaneously define bases for all the $G$-composition
factors of $Q$ as follows.  Let $U$ be such a composition factor. Then  there is an isomorphism $\psi_U$
from $U$ to $V$. For $1\le i \le 4$, we let $u_i = (v_i)\psi_U^{-1}$.  We call this a \emph{standard basis} of $U$. Given a standard basis $u_1, \dots, u_4$, we define  $\sigma_u = u_1u_2u_3u_4$.
Recall the homomorphism  $\gamma$ from Table~\ref{tab1}. Suppose that $R,S$ and $T$ are $G$-composition factors with standard bases $r_1, \dots, r_4$ and $s_1, \dots , s_4$.  Then  define a map from $R\otimes S$ to $T$ by setting, for all $i,j \in \{1, \dots 4\}$:

$$r_i\otimes s_j \mapsto (r_i\psi_R\otimes s_j\psi_S)\gamma\psi_T^{-1}.$$
Now we can determine the image
of  $r_i\otimes s_j$ in $T$ represented in the standard basis for $T$  directly from the table describing
$\gamma$. Thus, for example, $r_1\otimes s_2 $ maps to $\sigma_t$ and $r_1\otimes  s_1$ maps to
$\sigma_t^2t_1^{-5}$. Replacing $\gamma$ by a scalar multiple $m\gamma$ we get that $r_1\otimes s_1$ maps to
$(\sigma_t^2t_1^{-5})^m$.

We are now going to exploit the Hall-Witt identity \cite[Lemma 5.6.1 (iv)]{Gorenstein} which in a class three
group such as $Q$ takes the form
$$[x,y,z][y,z,x][z,x,y]=1$$
for all $x,y,z \in Q$.

\medskip
\begin{claim}\label{M2} $|Q/Z_2(Q)| = r^{12}$.
\end{claim}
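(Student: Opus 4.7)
The plan is to assume for contradiction that $|Q/Z_2(Q)|=r^8$, which by Lemma~\ref{3Ms} combined with Claim~\ref{M1} is the only remaining possibility. In this situation $Q/Z_2(Q)=A_1\oplus A_2$ with $A_i=M_i/Z_2(Q)\cong V$ for two subgroups $M_1,M_2$ normal in $G$ containing $Z_2(Q)$, and Claim~\ref{M1} gives $M_1',M_2'\le Z(Q)$.

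The idea is to extract two commutator pairings whose rigidity, supplied by Lemma~\ref{unique}, forces a numerical identity that can be tested against Table~\ref{tab1}. First, commutation descends to a $\GF(r)\ov G$-equivariant bilinear map
$$c\colon Q/Z_2(Q)\otimes Q/Z_2(Q)\longrightarrow Q'/(Q'\cap Z(Q))$$
(well defined since $[Z_2(Q),Q]\le Q'\cap Z(Q)$, and $\GF(r)\ov G$-equivariant because $R$ acts trivially on both quotients by Lemma~\ref{QProps}), and by Claim~\ref{M1} it vanishes on each $A_i\otimes A_i$. Its restriction to $A_1\otimes A_2$ must be nonzero (otherwise $Q'\le Z(Q)$ and $Q$ would be of class two), so Lemma~\ref{unique} forces this restriction to be a nonzero scalar multiple of $\gamma$. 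Combined with Lemma~\ref{modfacts} and Lemma~\ref{QProps}(vi), this also pins down $Q'/(Q'\cap Z(Q))\cong V$. A second commutator pairing
$$d\colon Q'/(Q'\cap Z(Q))\otimes Q/Z_2(Q)\longrightarrow \Gamma_2(Q)$$
is then well defined thanks to Lemma~\ref{QProps}(v), and by Lemma~\ref{unique} again, its restrictions $d_i:=d|_{V\otimes A_i}$ are scalar multiples $\mu_i\gamma$ of $\gamma$. Since $\Gamma_2(Q)=[M_1,Q']\cdot[M_2,Q']$ is nontrivial, at least one of $\mu_1,\mu_2$ is nonzero.

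Now I invoke the Hall--Witt identity, written additively in the elementary abelian group $\Gamma_2(Q)$. Applying it to a triple $(x,y,z)$ with $x,z\in M_1$ and $y\in M_2$, the term $[[z,x],y]$ vanishes because $[z,x]\in M_1'\le Z(Q)$. Substituting the formulas for $c$ and $d_1$, and using the symmetry $\gamma(u,v)=\gamma(v,u)$ visible in Table~\ref{tab1} to account for the sign in $c(y,z)=-c(z,y)$, the identity collapses to
$$\mu_1\bigl(\gamma(\gamma(a,b),c)-\gamma(\gamma(b,c),a)\bigr)=0\quad\text{for all }a,b,c\in V.$$
Taking $a=v_1$, $b=v_3$, $c=v_2$ and computing directly from Table~\ref{tab1} yields $\gamma(\gamma(v_1,v_3),v_2)-\gamma(\gamma(v_3,v_2),v_1)=(5\sigma-5v_2)-(5\sigma-5v_1)=5(v_1-v_2)$, which is nonzero in $V$ since $r\neq 5$. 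Hence $\mu_1=0$, and the parallel argument with $x\in M_1$ and $y,z\in M_2$ forces $\mu_2=0$, contradicting the non-triviality of $\Gamma_2(Q)$.

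The main obstacle is the bookkeeping needed to certify that $c$ and $d$ descend to $\GF(r)\ov G$-module maps (which uses all of Lemma~\ref{QProps}, notably $Z_2(Q)\le C_Q(Q')$ and $[Q,R]\le Z_2(Q)$) and to reconcile the antisymmetry of the group commutator against the symmetry of $\gamma$ when applying Hall--Witt. Once this is in place, the contradiction is obtained from a short explicit computation in the four-dimensional module $V$, using only the hypothesis $r\ne 5$.
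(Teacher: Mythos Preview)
Your proof is correct and follows essentially the same approach as the paper: both arguments reduce to the case $|Q/Z_2(Q)|=r^8$, set up the two commutator pairings as $\GF(r)\ov G$-module maps, invoke Lemma~\ref{unique} to identify them (up to scalar) with $\gamma$, and then apply the Hall--Witt identity to a triple with two elements from one $M_i$ and one from the other to obtain a contradiction via an explicit computation with Table~\ref{tab1}. The only cosmetic differences are that the paper assumes without loss of generality that $\Gamma_2(Q)=[M_1,M_2,M_2]$ (so that one $d_i$ is already nonzero) and derives the contradiction from a single Hall--Witt triple $(c_1,d_1,d_2)$, whereas you prove both $\mu_1=\mu_2=0$ and then conclude $\Gamma_2(Q)=1$; and you test the identity on $(v_1,v_3,v_2)$ rather than $(v_1,v_1,v_2)$.
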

\medskip

As $|Q/Z_2(Q)| > r^4$ by (\ref{M1}), it is sufficient to exclude the case where $|Q/Z_2(Q)| = r^{8}$.
Hence assume that $|Q/Z_2(Q)| = r^{8}$.
We let $M_1$ and $M_2$ be normal subgroups of $G$ such that $Q= M_1M_2$ and such that $C:= M_1/Z_2(Q)$ and $D:=M_2/Z_2(Q)$ have
order $r^4$. We choose standard bases $c_1, \dots, c_4$ for $C$ and $d_1, \dots, d_4$ for $D$. Then, as in
\ref{M1}, the commutator map defines an $\GF(r)\ov G$-module homomorphism  from $C \otimes D$ to $Q'/(Q'\cap Z(Q))$. If
this map is trivial, then $[M_1,M_2]\le Q' \cap Z(Q)$ and  \ref{M1} implies that $Q'\le Z(Q)$ which is not
the case. By  Lemmas~\ref{unique} and \ref{QProps}(vi), the image of this commutator map is $4$-dimensional and
isomorphic to $V$. We let the image be $E$ and take a standard basis $e_1, \dots, e_4$ such that the commutator
map is defined by $\gamma$.
 Finally we may assume that $\Gamma_2(Q)=[M_1,M_2,M_2]$  and take a standard
basis $f_1, \dots, f_4$, again chosen so that $\gamma$ represents the commutator map from $E \otimes D$ to
$\Gamma_2(Q)$. We consider the Hall-Witt identity with the elements $c_1$, $d_1$ and $d_2$ (and here it is
critical to note that the identity is independent of the representatives for $c_1$, $d_1$ and $d_2$ that we
choose).  We have $[d_1,d_2] \in Z(Q)$ from \ref{M1} and so $[d_1,d_2,c_1]=1$. Calculating further commutators,
using the map $\gamma$ from Table~\ref{tab1}, we get
$$[c_1,d_1,d_2] =  [ (c_1\psi_C\otimes d_1\psi_D)\gamma \psi_E^{-1}, d_2]=[\sigma_e^2e_1^{-5} ,d_2] =
(\sigma_e^2e_1^{-5} \psi_E \otimes d_2\psi_D)\gamma\psi_F^{-1} = \sigma_f^{5}f_2^{-10}
$$ and, similarly,
$$[d_2,c_1,d_1] = [\sigma_e,d_1] = \sigma_f^5f_1^{-5}$$
which is a contradiction as their product is not the identity.

Thus $|Q/Z_2(Q)| = r^{12}$. \hfill$\blacksquare$

\bigskip

From \ref{M2} we have that $|Q/Z_2(Q)|=  r^{12}$. Let $M_1$, $M_2$ and $M_3$ be normal
subgroups of $G$ such that $Q=
M_1M_2M_3$ and such that $C:= M_1/Z_2(Q)$, $D:=M_2/Z_2(Q)$ and $E:=M_3/Z_2(Q)$ have order $r^4$. We take standard bases $c_1, \dots, c_4$, $d_1, \dots, d_4$ and $e_1, \dots, e_4$ for
$C$, $D$ and $E$ as described. Since $M_i' \le Z(Q)$ by \ref{M1}, we may assume that $F:= [M_1,M_2](Q' \cap
Z(Q))/(Q' \cap Z(Q))$ is non-trivial. 
We set 
$$H:= [M_1,M_3](Q' \cap Z(Q))/(Q' \cap Z(Q))$$ 
and 
$$J:= [M_2,M_3](Q'
\cap Z(Q))/(Q' \cap Z(Q)).$$ 
These may be trivial groups.  We let $f_1,\dots, f_4$ be a standard basis for $F$,
and when $H$ or $J$ is non-trivial, we take standard bases $h_1,\dots, h_4$ and $j_1,\dots,j_4$ for $H$ and $J$
respectively, where all the bases are chosen so that the commutator map is represented by $\gamma$. Let $k_1,
\dots, k_4$ be a standard basis for $\Gamma_2(Q)$. Notice that the basis $k_1,\dots, k_4$ cannot necessarily be
chosen so that all the possible commutator maps are represented by $\gamma$ itself. However, they are represented
by scalar multiples of $\gamma$ and the powers appearing in the images below do not affect the failure of the
Hall-Witt identity which we now proceed to check for the elements $c_1$, $d_1$ and $e_2$. Taking all commutator
maps to be $\gamma$, we calculate
$$[c_1,d_1,e_2]= [\sigma_f^2f_1^{-5}, e_2] = \sigma_k^5k_2^{-10},$$
$$[d_1,e_2,c_1] = \begin{cases}1&J=1\cr [\sigma_j,c_1] =
\sigma_k^5k_1^{-5}&\text{ otherwise}\end{cases}$$ and
$$[e_2,c_1,d_1] = \begin{cases}1&H=1\cr [\sigma_h,c_1] =
\sigma_k^5k_1^{-5}&\text{ otherwise}\end{cases}$$ and, in full generality, the images lie in the cyclic groups
generated by the elements presented above. Thus it follows that the Hall-Witt identity does not hold in the
putative group $Q$. This concludes the proof of the theorem.

\section{Final remarks}

In  this section we prove that if $E(G/F(G))\cong \PSL_2(49)$, then $F(G)$ is an elementary abelian $7$-group. We also take the opportunity to add more
detail to the statement of Theorem~\ref{MainThm} by showing that the abelian $3$-group in Theorem~\ref{MainThm}
(iv) is elementary abelian.

\begin{lemma}\label{notin}  The group $\PSL_2(49)$ is not isomorphic to a subgroup of
$\GL_4(\mathbb Z/49\mathbb Z)$.
\end{lemma}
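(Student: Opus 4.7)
My plan is to argue by contradiction. Assume that $\PSL_2(49)$ embeds in $\GL_4(\mathbb{Z}/49\mathbb{Z})$. Let $\pi\colon \GL_4(\mathbb{Z}/49\mathbb{Z}) \to \GL_4(\mathbb{F}_7)$ be reduction modulo $7$; its kernel $K := I + 7\,M_4(\mathbb{F}_7)$ is an elementary abelian $7$-group. The heart of the argument is the claim that \emph{every element of order $7$ in $\GL_4(\mathbb{Z}/49\mathbb{Z})$ lies in $K$.} Granting this, every order-$7$ element of the copy of $\PSL_2(49)$ inside $\GL_4(\mathbb{Z}/49\mathbb{Z})$ lies in the abelian subgroup $K$; but $\PSL_2(49)$ is simple with non-trivial Sylow $7$-subgroups, so (by normality of the subgroup generated by its order-$7$ elements) it is generated by its elements of order $7$. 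This forces $\PSL_2(49)$ to be abelian, a contradiction.

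To prove the claim, let $g \in \GL_4(\mathbb{Z}/49\mathbb{Z})$ have order $7$ and write $g = I + N$. Then $\bar g := \pi(g)$ is unipotent, so $\bar N := \pi(N)$ is nilpotent in $M_4(\mathbb{F}_7)$, whence $\bar N^4 = 0$; equivalently, $N^k$ is divisible by $7$ in $M_4(\mathbb{Z}/49\mathbb{Z})$ for every $k \ge 4$. Since $7 \mid \binom{7}{k}$ for $1 \le k \le 6$, the terms $\binom{7}{k} N^k$ with $k = 4, 5, 6$ vanish modulo $49$, and writing $N^4 = 7P$ we obtain $N^7 = 7\,N^3 P$. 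The binomial expansion of $g^7 = (I + N)^7$ therefore collapses modulo $49$ to
\[
g^7 \equiv I + 7\bigl(N + 3N^2 + 5N^3 + N^3 P\bigr) \pmod{49},
\]
so $g^7 = I$ is equivalent to $\bar N + 3\bar N^2 + 5\bar N^3 + \bar N^3 \bar P = 0$ in $M_4(\mathbb{F}_7)$.

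It remains to analyze this equation according to the Jordan type of $\bar N$. If $\bar N^3 = 0$, the last two terms vanish and the equation becomes $\bar N(I + 3\bar N) = 0$; since $I + 3\bar N$ is a unit (being $I$ plus a nilpotent), this forces $\bar N = 0$. If $\bar N^3 \ne 0$, then $\bar N$ has a single Jordan block of size $4$, and after conjugating by a suitable element of $\GL_4(\mathbb{Z}/49\mathbb{Z})$ we may take $\bar N = E_{12} + E_{23} + E_{34}$; since $\bar N^3 = E_{14}$ has zero second row, the matrix $\bar N^3 \bar P$ also has zero second row, whereas the second row of $\bar N + 3\bar N^2$ equals $(0, 0, 1, 3) \ne 0$, ruling out the equation. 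Either way $\bar N = 0$ and hence $g \in K$. The only slightly delicate step is the $J_4$ case, which uses the explicit row computation; everything else is forced by the collapse of the binomial expansion modulo $49$.
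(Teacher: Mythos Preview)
Your argument is correct and is genuinely different from the paper's. The paper proves Lemma~\ref{notin} by a {\sc Magma} computation: it fixes an integral copy $A\cong\Alt(5)$ inside $\GL_4(\mathbb Z/49\mathbb Z)$, computes the centraliser of a Sylow $5$-subgroup $S$ of $A$, locates the unique Sylow $5$-subgroup $T$ of that centraliser (which would have to lie in any copy of $\PSL_2(49)$ containing $A$), and then checks that $\langle A,T\rangle$ has order divisible by $7^8$, which is too large.

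Your route avoids the computer entirely by proving the structural fact that \emph{no} element of order $7$ in $\GL_4(\mathbb Z/49\mathbb Z)$ survives reduction modulo $7$; combined with the simplicity of $\PSL_2(49)$ and the abelianness of the congruence kernel $K=I+7\,M_4(\mathbb F_7)$, this kills any embedding. The key computation---collapsing $(I+N)^7$ modulo $49$ using $\bar N^4=0$ and $7\mid\binom{7}{k}$ for $1\le k\le 6$, then reading off a contradiction in the second row when $\bar N$ has a full Jordan block---is clean and entirely elementary. One small point worth making explicit (you hint at it): the reduction to $\bar N$ in Jordan normal form is legitimate because any element of $\GL_4(\mathbb F_7)$ lifts to $\GL_4(\mathbb Z/49\mathbb Z)$, so the conjugation can be performed upstairs; this changes $P$ as well, but your row-$2$ argument is insensitive to $\bar P$. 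Compared with the paper's approach, yours is more conceptual and more portable (the same idea shows, for any prime $p$ and any $n<p$, that elements of order $p$ in $\GL_n(\mathbb Z/p^2\mathbb Z)$ lie in the congruence kernel), at the cost of a short case analysis; the paper's computation is quicker to state but relies on machine verification.
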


\begin{proof} We calculate using {\sc Magma} \cite{Magma}. Let  $J \cong \PSL_2(49)$ and suppose that $J$ is a
subgroup of $K=\GL_4(\mathbb Z/49\mathbb Z)$. Then, as $J$ contains a subgroup isomorphic to $\Alt(5)$, we can again take the integral
version $A$ of $\Alt(5)$ from Section~{2} and suppose that it is a subgroup of $J$. Now we take a Sylow
$5$-subgroup $S$ of $A$ and determine $C_K(S)$ using {\sc Magma}. It has order $2^5\cdot 3\cdot 5^2\cdot 7^4$ and is
$5$-closed. We let $T \in \sy_5(C_K(S))$. Then $T$ is the unique subgroup of order $25$ in $K$ containing $S$. It
follows that $J = \langle A,T\rangle$ and a calculation shows that $J$ has order $2^4\cdot 3\cdot
5^2\cdot 7^8$. But this is a contradiction because $7^8$ does not divide the order of $J \cong \PSL_2(49)$. Hence $\PSL_2(49)$ is not isomorphic to a subgroup of $K$.

\end{proof}

\begin{lemma}\label{74 unique} Suppose that $k$ is an algebraically closed field of characteristic 7 and that $V$ is an irreducible $k\PSL_2(49)$-module which admits an
element of order $5$ without non-zero fixed points. Then $V$ is isomorphic to $N \otimes N^\sigma$ where $N$ is the
natural $k\SL_2(49)$-module and
$\sigma$ is the automorphism of $k$ obtained by raising every element to its seventh power. Furthermore, $V$ is not a composition factor of $V \otimes V$.
\end{lemma}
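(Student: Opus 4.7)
The plan is to invoke Steinberg's tensor product theorem, which classifies the irreducible $k\SL_2(49)$-modules as the tensor products $L(a) \otimes L(b)^\sigma$ with $0 \le a, b \le 6$, where $L(a)$ denotes the $(a+1)$-dimensional symmetric power of the natural module $N = L(1)$. The centre $\langle -I \rangle$ of $\SL_2(49)$ acts on $L(a)$ as $(-1)^a$, so such a module factors through $\PSL_2(49)$ precisely when $a + b$ is even; this list then exhausts the irreducible $k\PSL_2(49)$-modules.

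Next I would analyse when an element $t \in \PSL_2(49)$ of order $5$ acts without non-zero fixed points on $L(a)\otimes L(b)^\sigma$. Because $5 \mid 49+1$ but $5 \nmid 49-1$, a preimage of $t$ in $\SL_2(49)$ lies in a non-split torus and, over the algebraic closure $k$, is conjugate to $\diag(\alpha,\alpha^{-1})$ for a primitive fifth root of unity $\alpha$. The Frobenius twist sends this pair to $(\alpha^7, \alpha^{-7})$, so the eigenvalues of $t$ acting on $L(a) \otimes L(b)^\sigma$ are
\[ \{ \alpha^{(a-2i)+7(b-2j)} : 0 \le i \le a,\ 0 \le j \le b \}. \]
Since $7 \equiv 2 \pmod 5$, fixed-point-freeness is equivalent to $2i + 4j \not\equiv a + 2b \pmod 5$ for every admissible pair $(i,j)$. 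If $a \ge 4$, the residues $\{2i \pmod 5 : 0 \le i \le a\}$ already cover $\{0,1,2,3,4\}$, and similarly if $b \ge 4$; in either case a fixed vector exists. This leaves only the eight pairs with $a,b \le 3$ and $a+b$ even, and a direct verification eliminates all of them apart from $(a,b) = (1,1)$, which yields precisely $V \cong N \otimes N^\sigma$.

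For the second assertion, I would invoke the Clebsch--Gordan decomposition $L(1) \otimes L(1) \cong L(2) \oplus L(0)$ (valid since $p = 7 > 2$). Then
\[ V \otimes V \cong (N \otimes N) \otimes (N^\sigma \otimes N^\sigma) \cong \bigl(L(2) \oplus L(0)\bigr) \otimes \bigl(L(2)^\sigma \oplus L(0)\bigr), \]
whose composition factors are $L(2) \otimes L(2)^\sigma$, $L(2)$, $L(2)^\sigma$ and $L(0)$, of dimensions $9$, $3$, $3$ and $1$ respectively. None of these is isomorphic to $V \cong L(1) \otimes L(1)^\sigma$, so $V$ is not a composition factor of $V \otimes V$.

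The main obstacle I expect is the eigenvalue bookkeeping: one must correctly incorporate the Frobenius twist (which multiplies the second exponent by $7 \equiv 2 \pmod 5$) and then complete the finite case check without omission. Once this is handled, the rest is immediate from Steinberg's theorem and the standard decomposition of $L(1) \otimes L(1)$.
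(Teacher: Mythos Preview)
Your argument is correct and follows essentially the same route as the paper: Steinberg's tensor product theorem (which the paper cites as Brauer--Nesbitt) to parametrise the irreducibles as $U_a\otimes U_b^\sigma$, an eigenvalue analysis of a diagonalised element of order $5$ to isolate $(a,b)=(1,1)$, and the Clebsch--Gordan decomposition $U_1\otimes U_1\cong U_0\oplus U_2$ to handle $V\otimes V$. The only cosmetic difference is that you reduce to $a,b\le 3$ via the residue-covering observation and then finish with a congruence check, whereas the paper first argues that if one index is at least $3$ then the other tensor factor must be $U_0$, and rules that case out by the $\PSL_2$ parity condition; the remaining small cases are then dispatched in the same spirit.
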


\begin{proof} By \cite[Section 30 (98)]{BN} we have  $V = U \otimes W^\sigma$ where $U$ and $W$ are basic
$k\SL_2(49)$-modules. The basic $k\SL_2(49)$-modules can be identified with the seven modules $U_j$, $0\le j \le
6$, obtained as degree $j$ homogeneous polynomials in $k[x,y]$.  Then $\dim U_j=j+1$.  Now let $\phi$ in $\SL_2(49) \le \GL_2(k)$ be of
order $5$. Then $\phi$ diagonalises in $\GL_2(k)$ and so we may assume that $\phi$ acts as the diagonal matrix
$\diag(\l,\l^{-1})$. It is straightforward to check that, for $j \ge 3$, $\phi$ has all possible non-trivial
eigenvalues on $U_j$ and so if $U_j$ or $U_j^\sigma$ appears in the tensor product defining $V$, then as $\phi$
acts fixed point freely on $V$,  one of the tensor factors in $V$ must be $U_0$. But then $\dim U_j$ must be even and we see
that $V$ is a module for $\SL_2(49)$, not for $\PSL_2(49)$. Similarly, we now deduce    the only contenders for
$V$ are $U_1 \otimes U_1^\sigma$, $U_2 \otimes U_1^\sigma$ and $U_1\otimes U_2^\sigma$. In the latter two cases
we again get a representation of $\SL_2(49)$ rather than $\PSL_2(49)$ and this shows that the only possibility is
that $V= U_1 \otimes U_1^\sigma$ and it is easy to check that this module has the required properties.

Finally, we have \begin{eqnarray*}V \otimes V&=& (U_1 \otimes U_1^\sigma)\otimes (U_1 \otimes U_1^\sigma)= (U_1
\otimes U_1)\otimes (U_1^\sigma \otimes U_1^\sigma) \\&=& (U_0 \oplus U_2)\otimes (U_0^\sigma\oplus U_2^\sigma)=
U_0 \oplus U_2^\sigma \oplus U_2 \oplus ( U_2\otimes U_2^\sigma)\end{eqnarray*} and none of these irreducible
summands  are isomorphic to $V$.
\end{proof}

\begin{lemma}\label{cr49} Any $\GF(7)\PSL_2(49)$-module which has all composition factors isomorphic to the module described in Lemma~\ref{74 unique} is completely reducible.

\end{lemma}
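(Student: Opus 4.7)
\emph{Plan.} The statement is equivalent to $\mathrm{Ext}^1_{\GF(7)G}(V,V)=0$ for $G=\PSL_2(49)$: a straightforward induction on composition length shows that the only possible obstruction to semisimplicity of a module with all composition factors isomorphic to $V$ is a non-split self-extension of $V$. Since $\mathrm{Ext}^1$ is preserved under base change for finite-dimensional modules over a field, it suffices to verify the vanishing over $k:=\overline{\GF(7)}$, with $V=U_1\otimes U_1^{\sigma}$ as in Lemma~\ref{74 unique}.

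The plan is then to use the isomorphism $\mathrm{Ext}^1_{kG}(V,V)\cong H^1(G,V^{\ast}\otimes V)$ together with the self-duality $V\cong V^{\ast}$. The latter holds because each $U_1$ is self-dual as an $\SL_2(49)$-module (via the natural symplectic form) and Frobenius twisting commutes with dualization. Lemma~\ref{74 unique} then gives the decomposition
\[
V^{\ast}\otimes V\cong U_0\oplus U_2\oplus U_2^{\sigma}\oplus(U_2\otimes U_2^{\sigma}),
\]
so the task reduces to showing $H^1(G,W)=0$ for each of these four simple summands. The trivial summand $U_0=k$ contributes nothing: $H^1(G,k)=\Hom(G,k^+)=0$, since $G$ is perfect.

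For the three non-trivial summands I would restrict to a Borel subgroup $B=TU$, where $U$ is a Sylow $7$-subgroup (elementary abelian of order $49$) and $T$ is a cyclic complement of order $24$. Because $[G:B]=50$ is coprime to $7$, restriction gives an injection $H^1(G,W)\hookrightarrow H^1(B,W)$; and because $|T|$ is invertible in $k$, the Lyndon--Hochschild--Serre spectral sequence collapses to give $H^1(B,W)\cong H^1(U,W)^T$. The remaining task---and the main obstacle---is then to verify, for each $W\in\{U_2,\,U_2^{\sigma},\,U_2\otimes U_2^{\sigma}\}$, that no $T$-invariant class in $H^1(U,W)$ survives: one has to describe the explicit actions of $U\cong(\GF(49),+)$ and of $T$ (by its weights on the summand) and check vanishing by a direct cocycle computation. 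Alternatively, this last step can be handled by quoting known first-cohomology vanishing theorems for simple $\SL_2(q)$-modules in non-defining characteristic (for instance from the work of Cline--Parshall--Scott).
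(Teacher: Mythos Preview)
Your reduction is correct and your outline is sound, but the paper takes a completely different---and much shorter---route: it simply records that a {\sc Magma} computation shows $\mathrm{Ext}^1_X(V,V)=\mathrm H^1(X,V\otimes V^*)=0$, and that is the entire proof. So your approach is genuinely different: you propose a conceptual argument (self-duality of $V$, the tensor decomposition from Lemma~\ref{74 unique}, restriction to a Borel $B=TU$ using that $[G:B]=50$ is prime to $7$, collapse of the Lyndon--Hochschild--Serre spectral sequence since $|T|=24$ is prime to $7$), whereas the authors are content with a machine check. Your route would, if completed, give an independent verification and some structural insight; the paper's route is quicker and leaves no loose ends.

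Two points to flag in your proposal. First, it is explicitly unfinished: you correctly identify the crux as showing $H^1(U,W)^T=0$ for $W\in\{U_2,\,U_2^{\sigma},\,U_2\otimes U_2^{\sigma}\}$, but you do not carry out the computation. This is exactly where the work lies, and it is not automatic; so as it stands this is a plan, not a proof. Second, your fallback reference is misplaced: we are in \emph{defining} characteristic here ($p=7$, $q=49=7^2$), not non-defining, so the Cline--Parshall--Scott results you would want are those for finite groups of Lie type in the defining characteristic (or results such as those of Andersen--J{\o}rgensen--Landrock on $H^1$ of $\SL_2(p^n)$), not the cross-characteristic theory. With that correction, quoting the literature would indeed close the gap.
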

\begin{proof} A {\sc Magma} calculation has shown that $\mathrm
{Ext}^1_X(V,V)=\mathrm H^1(X,V\otimes V^*)= 0$. This proves the lemma. \end{proof}

\begin{lemma}\label{49}
Suppose that $r$ is a prime and that $G$ is a group such that $X:=G/O_r(G) \cong\PSL_2(49)$ and $G$ has an element of order $5$ acting fixed point freely on $O_r(G)$. Then $r=7$ and $O_7(G)$ is elementary abelian and completely reducible as a $\GF(7)X$-module.
\end{lemma}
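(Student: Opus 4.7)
The plan is to first reduce to $r = 7$ and then to establish in turn that $R := O_7(G)$ is abelian, elementary abelian, and completely reducible as a $\GF(7)X$-module.

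The case $r = 5$ is immediate, since a $5$-element has nontrivial fixed points on any nontrivial $5$-group. For primes $r \notin \{5,7\}$, one considers a chief factor of $G$ inside $R$, which is an irreducible $\GF(r)X$-module with an element of order $5$ acting fixed point freely; this can be excluded either by appealing to earlier stages of the proof of Theorem~\ref{MainThm}, where the case $E(G/F(G)) \cong \PSL_2(49)$ already forces $F(G)$ to be a $7$-group, or by a direct analysis of the cross-characteristic modular representations of $\PSL_2(49)$. I expect this to be the main obstacle, since Lemmas~\ref{notin}, \ref{74 unique}, and~\ref{cr49} all concern characteristic $7$.

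Now fix $r = 7$ and argue that $R$ is abelian via a minimal counter-example. For any non-trivial proper $G$-invariant subgroup $R_0$ of $R$, minimality forces $R/R_0$ to be abelian, hence $R' \leq R_0$. If $R$ had more than one minimal $G$-invariant subgroup then their intersection would be trivial, forcing $R' = 1$; so there is a unique minimal $G$-invariant subgroup $M$ with $R' = M \cong V$ (Lemma~\ref{74 unique}). Since $[R,M]$ is a $G$-submodule of $M$, irreducibility together with the nilpotency of $R$ gives $[R,M] = 1$, so $R$ has class $2$. Then $R^7 \leq Z(R)$ and, by standard coprime action arguments, $R/Z(R)$ is elementary abelian with $5$-fixed point free action; Lemmas~\ref{74 unique} and~\ref{cr49} therefore give $R/Z(R) \cong V^k$ for some $k$. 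The commutator descends to a $G$-equivariant antisymmetric map $R/Z(R) \wedge R/Z(R) \to R' \cong V$, that is, to a $\GF(7)X$-homomorphism $V^k \wedge V^k \to V$. By Lemma~\ref{74 unique}, $V$ is not a composition factor of $V \otimes V$, so no such nonzero map exists; hence $R' = 1$, a contradiction.

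Next, suppose $R$ is abelian but not of exponent $7$, and minimise. A parallel argument shows that $R$ has a unique minimal $G$-invariant subgroup $M$ and that $R^7 = M \cong V$. The elementary abelian characteristic subgroup $R[7]$ is a $\GF(7)X$-module, hence a direct sum of copies of $V$ by Lemmas~\ref{74 unique} and~\ref{cr49}; but if it contained more than one copy then it would contain more than one minimal $G$-invariant subgroup of $R$, violating the uniqueness of $M$. Therefore $R[7] = M = R^7 \cong V$, which forces $R \cong (\Z/49\Z)^4$ as an abelian group. The action of $X$ on $R$ is faithful (otherwise $X$ would act trivially on the irreducible nontrivial quotient $R/R[7] \cong V$, a contradiction), so $X$ embeds in $\GL_4(\Z/49\Z)$, contradicting Lemma~\ref{notin}. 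Finally, once $R$ is elementary abelian its $\GF(7)X$-composition factors are all isomorphic to $V$ by Lemma~\ref{74 unique}, and complete reducibility follows from Lemma~\ref{cr49}.
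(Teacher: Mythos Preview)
Your proposal is correct and follows essentially the same approach as the paper: reduce to $r=7$ by citation, then use minimal counter-examples together with Lemmas~\ref{notin}, \ref{74 unique}, and \ref{cr49} to rule out the non-abelian and non-elementary-abelian cases. The only organisational difference is that the paper handles the abelian-but-not-elementary case first and then feeds that into the class-two argument (to get $Q/Q'$ elementary abelian), whereas you dispose of the non-abelian case first by working directly with $R/Z(R)$; both orderings work.
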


\begin{proof}
Let $Q:=O_r(G)$.
By \cite{c55} we may suppose that $r=7$ and thus that $Q$ is a $7$-group.

Suppose first that $Q$ is abelian, but not elementary abelian, and  that $G$ has minimal order with these properties. Every $G$-chief factor of $Q$ is isomorphic to the irreducible $4$-dimensional module
$V$ of  $X$ described in Lemma~\ref{74 unique}.  Let $N$
be a minimal $G$-invariant subgroup of $Q$. Then $N$ has order $7^4$ and $Q/N$ is elementary abelian. Since every maximal $G$-invariant subgroup is also elementary abelian, we infer  that $Q$ has a unique
such subgroup. By Lemma~\ref{cr49}, $Q/N$ is completely reducible. Therefore $Q/N$ has order $7^4$ and so $Q $ is homocyclic of order
$49^4$ and   Lemma~\ref{notin} provides the contradiction. Thus, if $Q$ is abelian, it is elementary abelian and   completely reducible as a $\GF(7)X$-module.

Suppose now, aiming for a contradiction, that $G$ is chosen so  that $Q$ is a class two group of minimal order.  Thus $Q'$ is elementary abelian of order $7^4$ and $Q/Q'$ is elementary abelian by the previous paragraph. In addition we may assume that $Q'= Z(Q)$ as $Z(Q)$ is completely reducible as a $\GF(7)X$-module. Since the
commutator map from $Q/Z(Q)\times Q/Z(Q)$ determines an $\GF(r)X$-module epimorphism  from $Q/Z(Q)\otimes Q/Z(Q)$ to
$Q'=Z(Q)$, and $V \otimes V$ has no quotients isomorphic to $V$ by Lemma~\ref{74 unique}, we have a contradiction. This proves the lemma.
\end{proof}

\begin{lemma}\label{notin2} The group $\Alt(6)$ is not isomorphic to a subgroup of
$\GL_4(\mathbb Z/9\mathbb Z)$.\end{lemma}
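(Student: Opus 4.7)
I would follow the template of Lemma~\ref{notin}. Assume for contradiction that $J \cong \Alt(6)$ is a subgroup of $K := \GL_4(\mathbb{Z}/9\mathbb{Z})$ and let $A$ be a point-stabiliser in $J$, so $A \cong \Alt(5)$. Reducing the natural $K$-module modulo $3$ gives a faithful $4$-dimensional $\GF(3)J$-module. Since $3$ has multiplicative order $4$ modulo $5$, the simple $\GF(3)\langle s\rangle$-modules for an element $s$ of order $5$ have dimension $1$ (trivial) or $4$ (irreducible and fixed-point-free). As $\Alt(6)$ is simple, $s$ cannot act trivially, so it acts without fixed points, and the same then holds on the restriction to $A$. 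Every $\GF(3)\Alt(5)$-composition factor with a fixed-point-free $5$-element is isomorphic to $V$ (as the other $\GF(3)\Alt(5)$-irreducibles are the trivial module and a $6$-dimensional module on which a $5$-element has a fixed space of dimension $2$). Hence the restriction equals $V$, and after a conjugation in $K$ we may take $A$ to be the image of the integral $\Alt(5)$-representation on $V$ reduced modulo $9$, exactly as in Lemma~\ref{notin}.

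Now fix $\sigma \in A$ corresponding to a $3$-cycle. In $\Alt(6)$ the centraliser of a $3$-cycle $(a,b,c)$ equals $\langle (a,b,c)\rangle \times \langle (d,e,f)\rangle \cong C_3 \times C_3$, with the second factor $\langle \tau \rangle$ satisfying $\tau \notin A$ and $\Alt(6) = \langle A, \tau\rangle$. Hence the existence of $J$ forces $C_K(\sigma)$ to contain an element $\tau$ of order $3$ outside $A$ such that $\langle A,\tau\rangle \cong \Alt(6)$. The plan is to compute $C_K(\sigma)$ in {\sc Magma}, enumerate the candidate elements $\tau$, and check whether any of them, together with $A$, generates a subgroup of order $360$; finding none yields the required contradiction.

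\textbf{Expected obstacle.} The computation is not completely straightforward because $K$ is large (of order $3^{16}\cdot |\GL_4(\GF(3))|$), so the enumeration must be organised carefully. I would keep it feasible by first computing the image $\overline C$ of $C_K(\sigma)$ in $\GL_4(\GF(3))$, selecting the elements of order $3$ there lying outside the image of $A$, and then lifting each such candidate through the elementary abelian kernel of the reduction $K \to \GL_4(\GF(3))$ and testing each lift individually. This two-stage refinement mirrors the Sylow-$5$ trick used to control the search in Lemma~\ref{notin}.
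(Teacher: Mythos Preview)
Your plan is correct but the paper uses a different hook. Both arguments fix the integral $A\cong\Alt(5)$ in $K=\GL_4(\Z/9\Z)$ and search a subgroup of $K$ for an extra generator of a putative $J\cong\Alt(6)$. The paper computes the normaliser $N$ in $K$ of a Sylow $2$-subgroup of $A$ (order $7776$) and checks in {\sc Magma} that no $\langle A,x\rangle$ with $x\in N$ is isomorphic to $\Alt(6)$; this works because a Klein four subgroup of $A$ sits as a normal subgroup of a Sylow $2$-subgroup of $J$, so some $x\in J\setminus A$ lies in $N$. You instead use $C_K(\sigma)$ for a $3$-cycle $\sigma\in A$, via $C_{\Alt(6)}(\sigma)\cong C_3\times C_3$. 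The paper's hook is a $3'$-subgroup and therefore interacts cleanly with the $3$-group kernel of $K\to\GL_4(3)$, keeping the search tiny; your $3$-element has a much larger centraliser in $K$, which is exactly the obstacle your two-stage lifting strategy is designed to handle. Your reduction to the specific copy of $A$ is argued more carefully than in the paper and is in fact valid: the remaining step---that two copies of $\Alt(5)$ in $K$ with the same mod-$3$ image are $K$-conjugate---amounts to $H^1(\Alt(5),V\otimes V)=0$, which holds because $\Lambda^2V$ is projective while $S^2V$ is the permutation module on $2$-subsets, so Shapiro gives $H^1(\Alt(5),S^2V)=H^1(\Sym(3),\GF(3))=0$.
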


\begin{proof}  Take the integral
copy of $A\cong \Alt(5)$ from  Section~2 and determine the
normaliser $N$ of a Sylow $2$-subgroup using {\sc Magma}. This has order $7776$. None
of the  groups $\langle A,x\rangle$, $x \in N$,  is isomorphic to
$\Alt(6)$. This proves the claim.
\end{proof}

Because of Lemmas~\ref{coho} and \ref{notin2} we can now prove our version of Theorem~\ref{MainThm} (iv) using similar arguments to
those used towards the end of the proof of Lemma~\ref{49}.

\begin{lemma} If $\Alt(6)$ acts on an abelian $3$-group $Q$ with an element of order $5$ acting fixed point freely, then $Q$ is elementary
abelian and completely reducible as a $\GF(3)\Alt(6)$-module.\qed
 \end{lemma}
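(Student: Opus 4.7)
The approach is to mirror the abelian portion of the proof of Lemma~\ref{49}, substituting $\Alt(6)$ for $\PSL_2(49)$. Once $Q$ is shown to be elementary abelian, the complete reducibility assertion follows from the $\mathrm{Ext}$-vanishing discussed below, so the task reduces to showing $Q$ is elementary abelian. Assume for contradiction that this fails, and pick a counter-example $G$ of minimal order with $G/O_3(G)\cong\Alt(6)$, $Q=O_3(G)$, and an element of order $5$ acting fixed point freely on $Q$.

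A short calculation (by restricting to $\Alt(5)\le\Alt(6)$ and invoking the uniqueness of the module $V$) identifies a unique irreducible $\GF(3)\Alt(6)$-module $V_6$, of dimension $4$, admitting a fixed point free element of order $5$, and whose restriction to $\Alt(5)$ is isomorphic to $V$. Consequently every $G$-chief factor of $Q$ is isomorphic to $V_6$. Letting $N$ be a minimal $G$-invariant subgroup, the minimality of $G$ applied to $G/N$ forces $Q/N$ to be elementary abelian, so $Q$ has exponent at most~$9$, and the minimality of $N$ then gives $Q^3=N$. The cubing map is a $G$-equivariant surjection $Q\twoheadrightarrow N$ with kernel $\Omega_1(Q)$, so $Q/\Omega_1(Q)\cong N$, exhibiting $\Omega_1(Q)$ as a maximal $G$-invariant elementary abelian subgroup of~$Q$.

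The key structural step is to show $\Omega_1(Q)$ is the \emph{unique} maximal $G$-invariant subgroup of $Q$. For any maximal $G$-invariant subgroup $M$, form the external semidirect product $M\rtimes\Alt(6)$ with $\Alt(6)$ acting through $G/Q$; this group has $O_3$ equal to~$M$, order strictly less than $|G|$, and an element of order $5$ acting fixed point freely on $M$. The minimality of $G$ then forces $M$ to be elementary abelian. Since the product of two distinct elementary abelian maximal $G$-invariant subgroups of the abelian group $Q$ would equal $Q$ and be elementary abelian, $\Omega_1(Q)$ is the unique maximal $G$-invariant subgroup, so that $\Omega_1(Q)/N$ is the unique maximal $G$-submodule of $Q/N$.

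Every composition factor of $Q/N$ is isomorphic to $V_6$. Using that $\mathrm{Ext}^1_{\GF(3)\Alt(6)}(V_6,V_6)=0$ --- the analogue of Lemma~\ref{cr49}, provable either by direct computation or by combining Lemma~\ref{coho} with restriction to a subgroup of $\Alt(6)$ of index prime to~$3$ (such as the Borel subgroup of $\Alt(6)\cong\PSL_2(9)$ of index~$10$) --- $Q/N$ is completely reducible, $\cong V_6^k$, and the uniqueness of the maximal submodule forces $k=1$. Hence $|Q|=3^8$, $Q\cong(\mathbb Z/9\mathbb Z)^4$ is homocyclic, and the faithful $\Alt(6)$-action on $Q$ embeds $\Alt(6)$ into $\GL_4(\mathbb Z/9\mathbb Z)$, contradicting Lemma~\ref{notin2}. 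The main obstacle is the $\mathrm{Ext}$-vanishing, which plays the role of Lemma~\ref{cr49} in the model proof and is not immediate from Lemma~\ref{coho} because $[\Alt(6):\Alt(5)]=6$ is not coprime to~$3$.
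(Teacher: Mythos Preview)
Your proposal is correct and follows precisely the route the paper sketches: mirror the abelian part of the proof of Lemma~\ref{49}, replacing $\PSL_2(49)$ by $\Alt(6)$, and replacing Lemmas~\ref{cr49} and~\ref{notin} by the $\Alt(6)$-analogues (the $\mathrm{Ext}$-vanishing and Lemma~\ref{notin2}). The reduction to a homocyclic group of type $(\mathbb Z/9\mathbb Z)^4$ and the appeal to Lemma~\ref{notin2} are exactly what the paper intends.

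The one point to tidy is your parenthetical justification of $\mathrm{Ext}^1_{\Alt(6)}(V_6,V_6)=0$. Restriction to a subgroup of index prime to~$3$ does make the restriction map on $\mathrm{Ext}^1$ injective, but Lemma~\ref{coho} is a statement about $\Alt(5)$, not about the Borel subgroup, so the two ingredients you name do not combine as written: you would still need a separate computation of $\mathrm{Ext}^1_B(V_6|_B,V_6|_B)$, which does not use Lemma~\ref{coho} at all. Your first option, a direct verification (or an adaptation of the \emph{method} of Lemma~\ref{coho} to $\Alt(6)$), is what is actually required, and this appears to be what the paper's terse citation of Lemma~\ref{coho} is pointing at as well. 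You are right that this step is not literally immediate from Lemma~\ref{coho} via $\Alt(5)$, since the index~$6$ kills the transfer argument in characteristic~$3$.
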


\end{document}